\newtheorem{theorem}{Theorem}[section]
\newtheorem{lemma}[theorem]{Lemma}
\theoremstyle{definition}
\newtheorem{definition}[theorem]{Definition}
\newtheorem{remark}[theorem]{Remark}
\newtheorem{question}[theorem]{Question}
\numberwithin{equation}{subsection}
\newcommand{\bbH}{\mathbb{H}}
\begin{document}

\title{Densities of Hyperbolic Cusp Invariants}
\author{C. Adams, R. Kaplan-Kelly, M. Moore,\\ B. Shapiro, S. Sridhar, J. Wakefield}
\begin{abstract}
We find that cusp densities of hyperbolic knots in $S^3$ are dense in $[0,0.6826\dots]$ and those of links are dense in $[0,0.853\dots]$. We define a new invariant associated with cusp volume, the cusp crossing density, as the ratio between the cusp volume and the crossing number of a link, and show that cusp crossing density for links is bounded above by $3.1263\dots$. Moreover, there is a sequence of links with cusp crossing density approaching 3. The least upper bound for cusp crossing density remains an open question. For two-component hyperbolic links, cusp crossing density is shown to be dense in the interval $[0,1.6923\dots]$ and for all hyperbolic links, cusp crossing density is shown to be dense in $[0, 2.120\dots]$. 
\end{abstract}

\maketitle

\section{Introduction}

\hspace{0.6cm} In order to study hyperbolic knots and links, one considers their complements and the invariants that are associated to them. Two such invariants are volume and cusp volume. The volume of hyperbolic manifolds has been the focus of much study. This paper considers certain invariants related to the cusps. 

A \emph{cusp} of a non-compact finite volume hyperbolic 3-manifold is a submanifold homeomorphic to $T\times [0,1)$ that lifts to a collection of horoballs in hyperbolic 3-space. In the case of knots and links, a cusp is topologically a tubular neighborhood of the link intersected with the link complement. A cusp is \emph{maximal} if there is no larger cusp containing it, which occurs exactly when the cusp is tangent to itself at one or more points. 

The cusp volume $cv(K)$ of a knot $K$ is the volume of its maximal cusp. The \emph{maximal cusp volume} of a link $L$, $cv(L)$, is the sum of the volumes of the individual cusps that are not overlapping in their interiors and that yields the largest possible total. Note that for a two-component link, the maximal cusp volume is realized when one of the two cusps is maximized first and the other is maximized relative to it.

It is natural then to consider the relationship between cusp volume and volume, which motivates the study of their ratio, cusp density.

\begin{definition} 
The \emph{cusp density} of a knot or link $L$, $cd(L)$, is defined as the ratio of the maximal cusp volume $cv(L)$ to the volume of the complement $vol(L)$. 

$$cd(L):=\frac{cv(L)}{vol(L)}$$ 
\end{definition}

Let $v_{tet} = 1.01494\dots$ be the volume of an ideal regular tetrahedron in hyperbolic 3-space and $v_{oct} = 3.6638\dots$ the volume of an ideal regular octahedron in hyperbolic 3-space. In \cite{Bor}, B{\"o}r{\"o}czky proved that the densest packing of horoballs in hyperbolic space gives a ratio of $\frac{\sqrt{3}}{2v_{tet}}\approx 0.853\dots$ which in turn provides an upper bound on cusp density (cf.\cite{Mey2}). This value for cusp density is achieved by the figure-eight knot. By results of \cite{Reid}, this is the only knot that achieves this highest possible cusp density. In \cite{Adams6}, it was proved that cusp densities for all cusped finite volume hyperbolic 3-manifolds are dense in the interval $[0,0.853\dots]$.

In Section 2, we find intervals in which the respective cusp densities of knots and links are dense. We first outline the construction from \cite{Adams6} for general cusped hyperbolic 3-manifolds, and then show that it in fact proves that cusp densities of link complements in $S^3$ are a dense subset of 
$[0, 0.853 \dots]$. We next construct intervals in which cusp densities of knots are dense, given a knot with a particular cusp density, and then use explicit examples to obtain the following theorem:

\begin{theorem}
Cusp densities of knots in $S^3$ are dense in $[0,0.6826\dots]$.
\end{theorem}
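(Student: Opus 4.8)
The plan is to split the argument into a general \emph{interpolation construction} and a finite collection of explicit seed knots. First I would establish a construction lemma of the form: given any hyperbolic knot $K\subset S^3$ with cusp density $cd(K)=d$, the cusp densities of knots in $S^3$ are dense in $[0,d]$. Granting such a lemma, the theorem reduces to exhibiting explicit knots whose cusp densities climb toward $0.6826\dots$, since density in $[0,0.6826\dots]$ follows once the seed values have supremum $0.6826\dots$: a countable union of dense subsets of intervals $[0,d_i]$ with $\sup_i d_i = 0.6826\dots$ is dense in $[0,0.6826\dots]$.

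For the construction lemma I would mirror the link construction of \cite{Adams6} while forcing every manifold in the family to be a knot complement in $S^3$. Starting from $K$, I would augment or drill an unknotted circle $C$ bounding a disk punctured by $K$, producing a hyperbolic two-component link $K\cup C$. Performing $1/q$ Dehn filling on the cusp of $C$ inserts $q$ full twists along the strands of $K$ passing through the disk and yields a genuine knot $K_q\subset S^3$. By Thurston's Dehn surgery theorem the volumes $vol(K_q)$ increase to $vol(K\cup C)$, with the rate of convergence controlled via the Neumann--Zagier asymptotics. To reach small densities and to interpolate, I would iterate this step --- inserting several independent twist regions, or chaining in low-density building blocks glued along totally geodesic thrice-punctured spheres --- so that total volume can be inflated by a controllable, unbounded amount while the surviving maximal cusp stays comparable to that of $K$. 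Tuning the number and lengths of these regions lets the ratio $cv/vol$ sweep densely across $[0,d]$.

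The main obstacle is controlling the maximal cusp volume $cv(K_q)$ under Dehn filling, rather than merely the total volume. The maximal cusp is a global object: filling can alter both the shape of the surviving cusp and the location of the first self-tangency that determines maximality, so $cv(K_q)$ need not vary monotonically or obviously continuously. I would resolve this by showing that as $q\to\infty$ the maximal horoball packing of $K_q$ converges geometrically to the restriction of the maximal packing of $K\cup C$ to the $K$-cusp, with explicit error bounds coming from geometric convergence of the Dehn-filled structures; combined with $vol(K_q)\to vol(K\cup C)$ this pins down the limiting density, after which the iterated diluting step fills the gaps below it.

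Finally, for the explicit examples I would compute the cusp densities of a concrete family of knots from their hyperbolic structures and maximal-cusp data, choosing the family so that its densities accumulate at $0.6826\dots$. This supremum lies strictly below the B\"or\"oczky bound $\tfrac{\sqrt{3}}{2v_{tet}}\approx 0.853\dots$ precisely because, by \cite{Reid}, the figure-eight knot is the \emph{unique} knot attaining $0.853\dots$; it is an isolated maximum, so the dense interval of achievable knot densities terminates at $0.6826\dots$ rather than at $0.853\dots$, with the figure-eight persisting as a lone realized value above the interval.
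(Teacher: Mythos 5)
Your proposal has the right broad flavor (augmentation, Dehn filling, gluing along totally geodesic surfaces, a continuity/interpolation argument), but it has two genuine gaps, one of which is fatal to the stated plan. First, the endpoint $0.6826\dots$ is left completely unexplained: your reduction requires exhibiting seed knots whose cusp densities accumulate at $0.6826\dots$, and producing such knots is precisely the hard content of the theorem, not a computation one can defer to ``explicit examples.'' Worse, your justification for why the interval terminates there is a non sequitur: Reid's theorem says only that the figure-eight knot is the \emph{unique} knot realizing $0.853\dots$; it does not make $0.853\dots$ an isolated value, nor does it create any gap below it. The paper explicitly leaves open whether knot cusp densities accumulate at $0.853\dots$. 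In the paper, $0.6826\dots$ is not a ceiling imposed by Reid but the explicit value $\frac{4\sqrt{3}}{10 v_{tet}}$, arising from a concrete construction: the maximally twisted four chain (volume $10v_{tet}$), with a half-twist inserted through a twice-punctured disk to merge two cusps of volume $2\sqrt{3}$ each, followed by high Dehn fillings, yielding a family $F_2$ of augmented cross tangle links with restricted cusp density approaching $\frac{4\sqrt{3}}{10v_{tet}}$ and tangle meridian approaching $2$. Without some such construction your argument proves density only in $[0,\sup_i d_i]$ for whatever seed densities $d_i$ you can actually exhibit, and you have exhibited none.

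Second, your interpolation step hides the key technical lemma. The paper does not interpolate from a single knot $K$; it takes belted sums of $k$ copies of a low-density augmented cross tangle link and $p$ copies of a high-density one, and the crucial point is how the \emph{maximal cusp volume} behaves under belted sum: the contribution of the larger-meridian piece is scaled by the factor $\bigl(\frac{m_1}{m_2}\bigr)^2$, and even establishing this requires a ``no poking'' hypothesis (no horoball of the tangle cusp crosses the geodesic twice-punctured disk), so that the fundamental domains and their horoball packings glue compatibly. Your phrase ``chaining in low-density building blocks glued along totally geodesic thrice-punctured spheres'' implicitly assumes cusp volumes simply add and that the surviving cusp ``stays comparable to that of $K$''; neither is automatic, and the meridian-matching issue means your claimed lemma (density in $[0,d]$ for $d=cd(K)$, for an arbitrary knot $K$) is not what this machinery gives. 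The paper's families $F_1$ and $F_2$ are engineered precisely so that both meridians approach $2$, making the scaling factor approach $1$; that is why the upper endpoint of the dense interval equals the restricted density $\frac{4\sqrt{3}}{10v_{tet}}$ of $F_2$ rather than a strictly smaller scaled-down value.
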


A crucial construction for our proof is \emph{belted sums} of links as shown in Figure \ref{beltedsum}. In \cite{Adams7}, it was proved that because incompressible twice-punctured disks are totally geodesic with a unique hyperbolic structure,  the hyperbolic structures of the two link complements are preserved when they are glued together along the cut open twice punctured disks bounded by the trivial component. Thus, the volume of the belted sum is the sum of the two respective volumes, as in Figure \ref{beltedsum}. In this paper, we study the effect of belted sums on cusp volume, and use these results to prove the main lemma we need. For this purpose, the following definition will be useful.

\begin{definition} 
Let $\mathcal{A}$ be a subset of components of a link $L$. The \emph{restricted cusp density} of $\mathcal{A}$ in $L$, $cd_R(L, \mathcal{A})$, is defined to be the ratio of the maximal  cusp volume in $\mathcal{A}$, denoted $cv_R(L,\mathcal{A})$, to the volume of the complement.

$$cd_R(L, \mathcal{A}):=\frac{cv_R(L, \mathcal{A})}{vol(L)}$$ 
\end{definition}

The results on cusp volumes for belted sums rely on finding two links $L_1$ and $L_2$, one having high restricted cusp density of a single cusp and the other having low restricted cusp density. Unfortunately, finding suitable links with restricted cusp density greater than $.6826\dots$ has so far been unsuccessful. It is the only obstacle to showing that cusp densities of knots are dense in a larger interval than $[0, .6826\dots]$. It remains an open question as to whether there exists a sequence of knots with cusp densities approaching $0.853\dots$ .

In Section 3 we define and study a new link invariant which we call cusp crossing density. The invariant \emph{volume density}, $d_{vol}(L)$, which is the ratio of the volume to the crossing number, has been considered in \cite{Adams10}, \cite{Bur}, \cite{CKP1}, and \cite{CKP2}. Volume densities of all hyperbolic knot and link complements lie in $[0,3.6638\dots]$ and are dense in that interval. One crucial motivation for studying volume density has been to explore interesting relations with the determinant density for knots, which is defined to be \[d_{det}(K) = \frac{2\pi \ln(det(K))}{c(K)}\]  This is  a combinatorial invariant of knots that is dense in the same interval $[0,3.6638\dots]$. 

We define cusp crossing density analogously to volume density.

\begin{definition}
Let $L$ be a hyperbolic link in $S^3$ and $c(L)$ denote its crossing number. The \emph{cusp crossing density}, $d_{cc}(L)$ is defined as 

$$d_{cc}(L):=\frac{cv(L)}{c(L)}$$
\end{definition} 

We look at bounds for cusp crossing density. Previously, in \cite{ACFGK}, it was proved that for any knot $K$, $cv(K) \leq \frac{9c}{2} (1-1/c)^2$. Hence $d_{cc}(K) < 4.5$.

We  show that in fact, cusp crossing densities for hyperbolic knots and links are bounded above by $3.1263\dots$.  We then obtain several families of links with cusp crossing density approaching 3 from below. We find sequences of hyperbolic knots with cusp crossing density as high as 1.706. We also show that cusp crossing density for hyperbolic two-component links is dense in the interval $[0,1.6923\dots]$ and cusp crossing density for all hyperbolic links is dense in the interval $[0, 2.120\dots]$. We suspect that the actual upper bound on cusp crossing density for links is 3 and that cusp crossing densities of links are dense in the interval $[0,3]$.

The results in this paper would not have been possible without the opportunity to experiment afforded by the computer program SnapPy \cite{snap}. We are very grateful to the creators of that program.

\begin{figure}[h]
\centering
\begin{subfigure}{.3\textwidth}
\centering
\includegraphics[scale=.5]{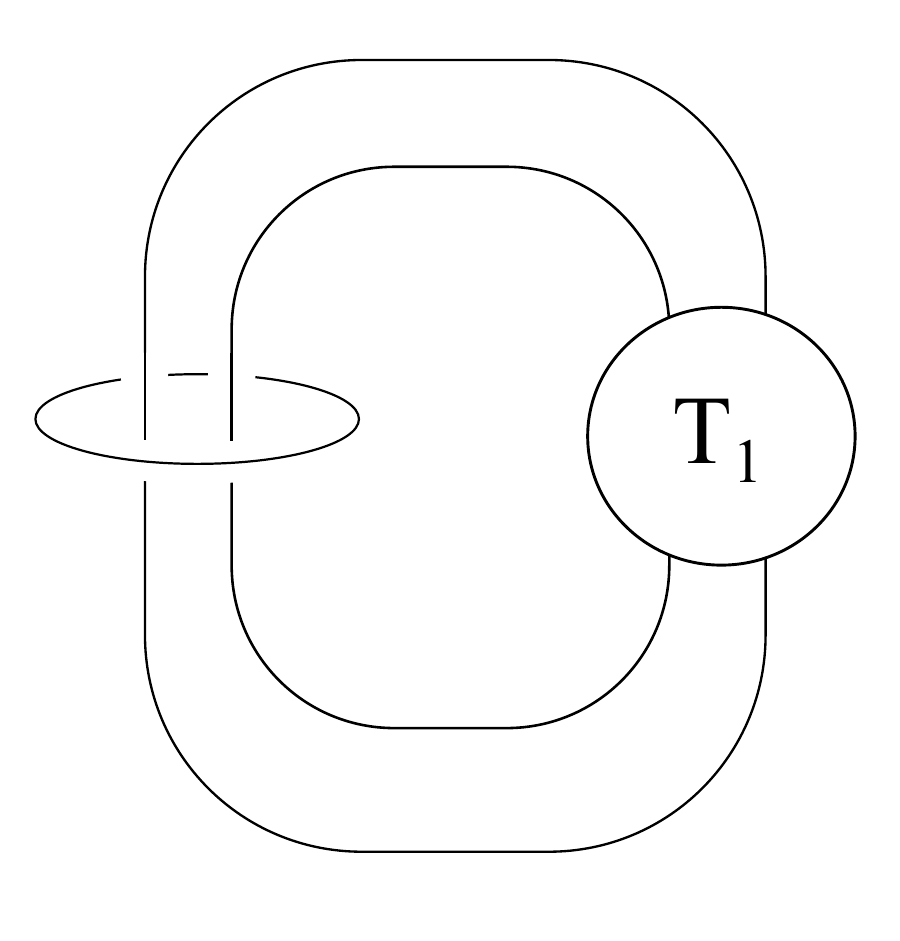}
\end{subfigure}
\begin{subfigure}{.3\textwidth}
  \centering
  \includegraphics[scale=.5]{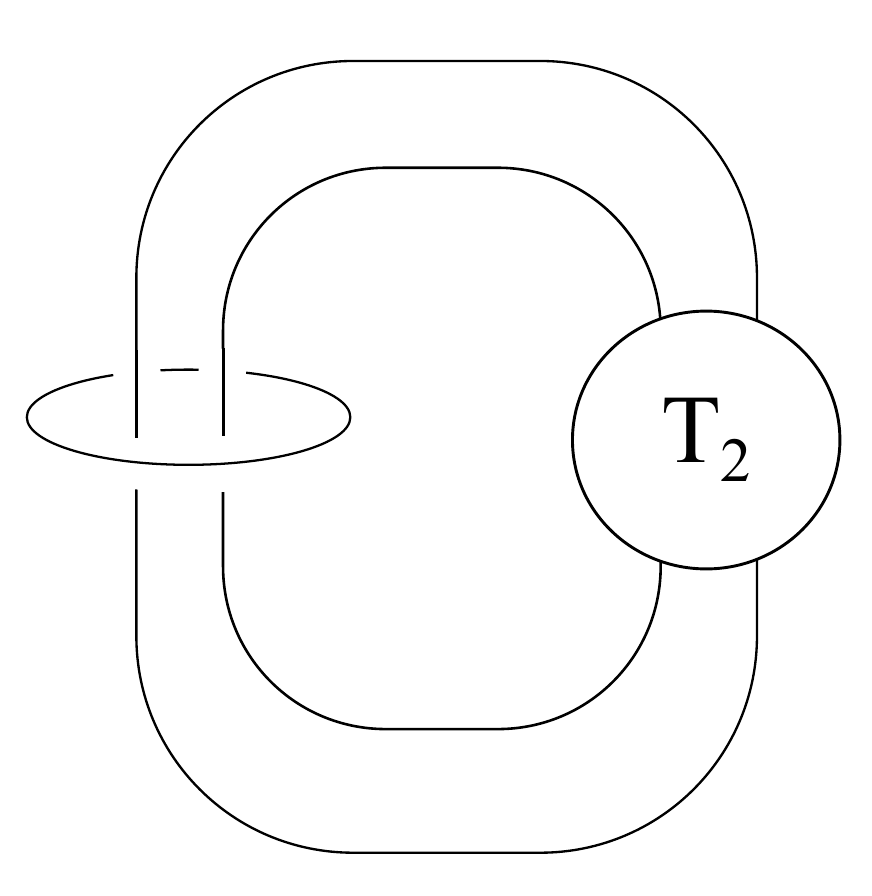}
  \caption{Link $L_2$}
\end{subfigure}
\begin{subfigure}{.3\textwidth}
  \centering
 \includegraphics[scale=.5]{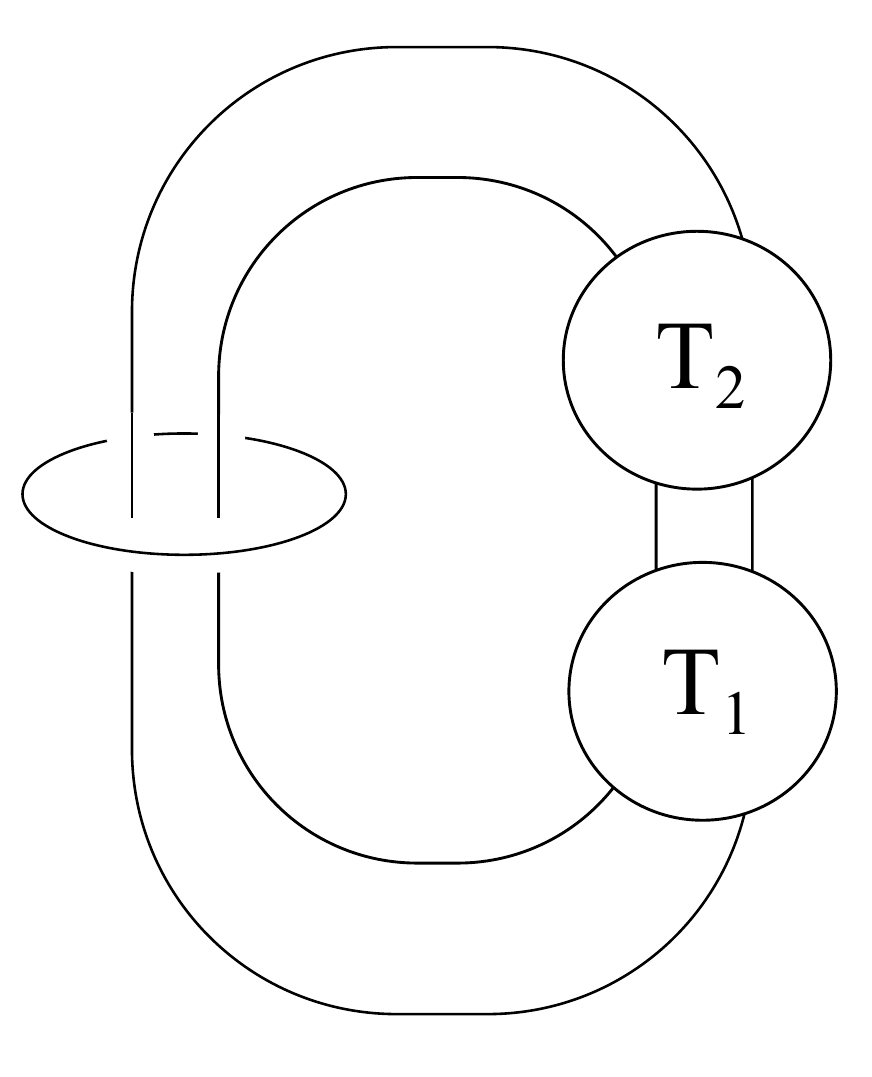}
  \caption{Link $L_3$}
\end{subfigure}
\caption{A belted sum of two links}
\label{beltedsum}
\end{figure}

\section{Cusp Density of Links}

Let $L$ be a link and $A$  a trivial component of $L$ bounding an incompressible $n-$punctured disk.  Consider a projection of $L$ such that $n$ strands pass through the disk bounded by $A$ (see Figure 2).  

\begin{figure}[h]
\centering
\includegraphics[scale=.5]{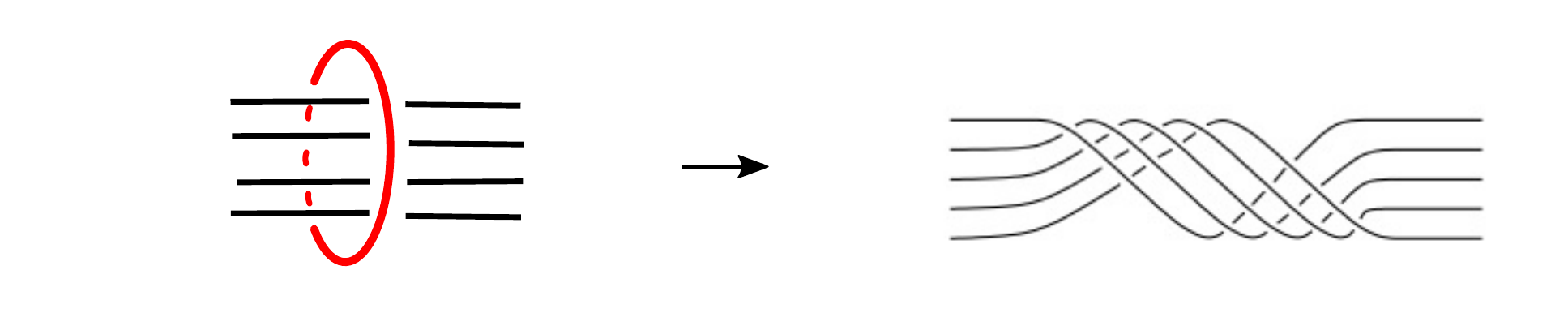}
\caption{(1,1)-Dehn filling on trivial component}
\label{twist}
\end{figure}

\begin{remark} 
 A $(1,p)$-Dehn filling on component $A$ gives the complement of the link $L'$ obtained from $L$ by removing $A$ and adding $p$ full twists to the $n$ strands that passed through it, as shown in Figure 2.  
\end{remark}


\begin{definition}
Let $\mathcal{A}$ be a subset of components of link $L$, such that all components of $\mathcal{A}$ are trivial and any pair of components in $\mathcal{A}$ form either the Hopf link or the unlink.  Define the graph $G$ with the components in $\mathcal{A}$ as vertices and edges between pairs of components that are linked.  We will call $\mathcal{A}$ a {\it chain of shape $G$ in $L$}.
\end{definition}

\begin{lemma}
\label{chain lemma}
Let $L$ be a link in $S^3$ of at least $m+1$ components containing a chain $\mathcal{A} = \{A_1, A_2, \dots, A_m\}$ of shape $G$.  If $G$ has no cycles, then the manifold obtained by $(1,p_i)$-Dehn filling each of the components $A_i$ in $L$ remains a link complement.
\end{lemma}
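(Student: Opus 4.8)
The plan is to induct on the number $m$ of chain components, filling them one at a time and invoking the Remark above at each step. The point of the acyclicity hypothesis is to guarantee that, at every stage, there is a component we may fill without destroying the chain structure on the remaining components, so that the induction can proceed. The base cases $m=0$ and $m=1$ are immediate: filling a single trivial component is exactly the situation of the Remark, whose output is by definition the complement of a link in $S^3$.

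For the inductive step I would first use that $G$, being a finite forest, has a vertex of degree at most one; call the corresponding component $A_\ell$. Since $A_\ell$ is a trivial component of the link built so far, the Remark applies verbatim: the $(1,p_\ell)$-Dehn filling on $A_\ell$ is realized by deleting $A_\ell$ and inserting $p_\ell$ full twists on the strands piercing the disk $D_\ell$ that $A_\ell$ bounds, and the result is again the complement of a link $\widetilde{L}$ in $S^3$. What then remains is to check that the unfilled members of $\mathcal{A}$ still form a chain in $\widetilde{L}$, of shape $G \setminus A_\ell$, so that the induction hypothesis governs the remaining $(1,p_i)$-fillings and composing the fillings yields a link complement.

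The heart of the argument, and the step I expect to be the main obstacle, is verifying this persistence of the chain structure. Because $A_\ell$ has degree at most one, the only member of $\mathcal{A}$ meeting $D_\ell$ is its unique neighbor $A_p$ (if any), and it does so in a single point, since $A_\ell \cup A_p$ is a Hopf link. A single strand carried through a full twist remains unknotted, so $A_p$ is still a trivial component of $\widetilde{L}$, while every other member of $\mathcal{A}$ is disjoint from $D_\ell$ and is left untouched by the twist. For any two surviving components $X, Y \in \mathcal{A}$, the twist alters their linking number by a multiple of $\mathrm{lk}(X, A_\ell)\,\mathrm{lk}(Y, A_\ell)$, and at least one factor vanishes, since $A_\ell$ has only the single neighbor $A_p$; combined with the geometric locality of the twist near $D_\ell$, this preserves each pairwise Hopf/unlink type. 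Hence $\{A_1,\dots,A_m\}\setminus\{A_\ell\}$ is a chain of shape $G \setminus A_\ell$, which as a subgraph of a forest is again a forest, and the induction closes.

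Finally, I would note why acyclicity is indispensable, since this also locates exactly where the argument would fail otherwise. In a graph containing a cycle, every vertex of that cycle has degree at least two, so no matter which $A_i$ we attempt to fill first, its disk $D_i$ is pierced by at least two other members of the chain; the resulting full twist then links those neighbors with one another and can force one of them to meet a third component's disk more than once, after which a later blow-down would knot it and the Remark could no longer be applied. The leaf-peeling order available for a forest is precisely what sidesteps this difficulty.
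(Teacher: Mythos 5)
Your proposal follows the same leaf-peeling induction as the paper, but it has a genuine gap, and it sits at exactly the step you did not address. After blowing down the leaf $A_\ell$, you carefully check that the surviving components still form a chain, but you never ask what the remaining instructions ``$(1,p_i)$-Dehn filling'' mean in the new link $\widetilde{L}$. A filling slope is specified relative to a meridian--longitude framing, and the twist realizing the filling of $A_\ell$ changes the canonical longitude of the parent $A_p$: if $t=\mp p_\ell$ is the number of twists, the image of the old longitude is the new longitude plus $t\cdot\mathrm{lk}(A_p,A_\ell)^2=t$ meridians. Hence the prescribed slope $\mu+p_p\lambda_{\mathrm{old}}$ on $A_p$ is the $(1\mp p_\ell p_p,\,p_p)$-slope of $\widetilde{L}$, \emph{not} its $(1,p_p)$-slope, so your induction hypothesis---which covers only fillings of twist type $(1,q)$---does not apply to the remaining fillings, and the induction does not close. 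This is precisely the point the paper's own proof singles out and handles explicitly (it asserts that the parent's filling ``becomes a $(1,p_j+p_i)$-Dehn filling,'' i.e.\ stays of twist type); your write-up is silent on it, and the chain-persistence and linking-number checks you supply instead are the easy part. (As a smaller aside, linking numbers alone also do not pin down Hopf-versus-unlink type, though that issue is repairable.)

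The omission is fatal rather than cosmetic, because an argument that ignores the framing bookkeeping would prove too much. Take $L$ to be a Hopf link $A_1\cup A_2$ together with a split unknotted component $K$; then $\mathcal{A}=\{A_1,A_2\}$ is a chain whose shape is a single edge, certainly a forest, and your argument applies verbatim: fill the leaf $A_1$ (delete it and twist the single strand of $A_2$, which stays unknotted), then fill $A_2$ by induction, ``obtaining a link complement.'' But the closed manifold produced by performing both fillings on the Hopf sublink is the lens space with first homology of order $|1\mp p_1p_2|$; for $p_1=p_2=1$ this is $S^1\times S^2$ or $\mathbb{RP}^3$ depending on orientation conventions, and in either case the filled manifold is that closed manifold minus a neighborhood of $K$---which contains a non-separating $2$-sphere or an $\mathbb{RP}^3$ connected summand and so is not a link complement in $S^3$. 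What happens here is exactly the framing shift above: after blowing down $A_1$, the slope originally prescribed on $A_2$ has become the $(1\mp p_1p_2,\,p_2)$-curve, e.g.\ the longitude, rather than a $(1,q)$-curve. Any correct treatment must confront how slopes transform under the blow-down; note that this computation also disagrees with the paper's asserted $(1,p_j+p_i)$, so this step is delicate in the paper's own proof and is precisely where the content of the lemma lies.
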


\begin{proof} 
If $G$ has no cycles, then it can be considered as a union of finitely many disconnected trees. If $G$ consists of finitely many trees, each with $n$ or fewer edges, then induction on $n$ will show that $(1,p)$-Dehn filling the components in $\mathcal{A}$ yields a link complement.

If $n = 0$ then $G$ has only isolated vertices, so none of the components of $\mathcal{A}$ are linked with each other. This implies that they can then be filled one by one to get a link complement, and the twists from each filling in the resulting link will be isolated and not affect each other.

Now assume the lemma holds when all trees have less than or equal to $n - 1$ edges, and let the trees of $G$ have $n$ or fewer edges.  For each nontrivial tree in $G$, choose a leaf vertex of the tree.  Each corresponding component is only linked with the component that is its parent in its tree and possibly some components of $L$ that are not in $\mathcal{A}$. 

As shown in Figure \ref{twist}, $(1,p_i)$-Dehn filling on one of the leaf components $A_i$ gives $p_i$ full twists to the strands passing through the disk it bounds, but its parent component $A_j$, which only passes through the disk once,  remains trivial and all other components in $\mathcal{A}$ are unaffected. The Dehn filling for $A_j$ becomes  a $(1, p_j + p_i)$-Dehn filling.  By induction, filling on the remaining components will result in a link complement.
\end{proof}

\begin{theorem}
The set of all cusp densities for hyperbolic link complements in $S^3$ is dense in the interval $[0, 0.853\dots].$
\end{theorem}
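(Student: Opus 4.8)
The plan is to take the construction of \cite{Adams6}, which realizes a dense set of cusp densities among \emph{all} cusped finite-volume hyperbolic $3$-manifolds, and to verify that each manifold it produces is in fact the complement of a link in $S^3$. Since \cite{Adams6} already establishes density in $[0,0.853\dots]$ for this larger class, once we confirm that its manifolds are link complements the identical values are realized by link complements, and the theorem follows. Note that the right endpoint is attained outright: the figure-eight knot complement has cusp density exactly $\frac{\sqrt{3}}{2v_{tet}} = 0.853\dots$, and a knot is a one-component link, so it remains only to produce link complements whose densities are dense in the interior of the interval and accumulate at $0$.

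First I would recall the mechanism of the \cite{Adams6} construction and recast it in terms of diagrams in $S^3$. The idea is to begin with a fixed hyperbolic link carrying one or more trivial components, each bounding an incompressible punctured disk pierced by strands of the link, and then to perform $(1,p_i)$-Dehn fillings on these trivial components. By the Remark preceding Lemma \ref{chain lemma}, such a filling simply inserts $p_i$ full twists on the strands through the corresponding disk. Letting the $p_i \to \infty$ drives the total volume toward that of the unfilled complement from below, by Thurston's hyperbolic Dehn surgery theorem, while the geometry of the remaining cusps converges; thus both $vol(L)$ and the maximal cusp volume vary in a controlled fashion. Varying the number of trivial components, the number of strands pierced, and the filling coefficients then sweeps out cusp densities densely throughout $[0,0.853\dots]$, with large volume relative to cusp volume forcing the ratio toward $0$.

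The crucial step is to certify that these Dehn-filled manifolds are genuinely link complements in $S^3$, and this is exactly where Lemma \ref{chain lemma} enters. I would arrange the trivial components being filled to form a chain $\mathcal{A}$ whose linking graph $G$ is acyclic, i.e.\ a disjoint union of trees. Geometrically the claim is transparent for a single component, since a $(1,p)$-filling of an unknotted trivial component is realized by a twist homeomorphism of $S^3$; the content of the Chain Lemma is that this persists for an entire acyclic chain, the twists induced by filling a leaf being absorbed into the filling coefficient of its parent. Hence every manifold in the recast construction is the complement of a link in $S^3$. One also checks hyperbolicity: for all but finitely many slopes on each filled component the result is hyperbolic by the Dehn surgery theorem, so the attainable densities lie in the hyperbolic range.

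The main obstacle I anticipate is the organizational work of showing that the manifolds used in the abstract \cite{Adams6} density argument can in fact be presented as $(1,p_i)$-fillings of a single link along an acyclic chain of trivial components, so that Lemma \ref{chain lemma} applies and the ambient manifold is verifiably $S^3$ rather than some other filling of the solid tori. Once this presentation is in place and hyperbolicity is confirmed, density transfers immediately from the class of all cusped hyperbolic $3$-manifolds to the subclass of link complements in $S^3$, completing the proof.
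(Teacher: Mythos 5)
Your overall strategy is exactly the paper's: import density from \cite{Adams6} and use Lemma \ref{chain lemma} to certify that the manifolds produced there are link complements. However, you stop precisely where the paper's proof actually begins. What you label ``the main obstacle I anticipate'' --- showing that the manifolds in the \cite{Adams6} construction really are obtained by $(1,p_i)$-filling trivial components forming an \emph{acyclic} chain --- is the entire mathematical content of the theorem; everything else (Rolfsen twists, Thurston's Dehn surgery theorem, the convergence of volumes and cusp volumes) is already in \cite{Adams6} or in the remarks preceding the theorem. You cannot simply ``arrange'' the filled components to have acyclic linking graph: the links and the set of unfilled cusps are dictated by the \cite{Adams6} construction (they are what produce restricted cusp density near a prescribed $x$), so one must check that the construction, as given, happens to satisfy the hypothesis of Lemma \ref{chain lemma}.

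The paper carries out that check concretely. The manifold with restricted cusp density near $x$ is $F_{k,n,m}$, the belted sum of $D_{n,m}$ (the $m$-fold cyclic cover, around the cusp $C_4$, of the $n$-component alternating daisy chain $D_n$) with a cover $L_k$ of the minimally twisted 5-chain; the cusps left unfilled are all of $L_k$ together with the lifts $\mathcal{C}_1$ of four designated cusps $C_1,C_2,C_3,C_4$ of $D_n$. The components that \emph{do} get filled all lie in $D_{n,m}$, and because the daisy-chain components form a closed cycle in the linking graph, deleting the lifts of the four retained cusps breaks that cycle into two disjoint paths --- two trees, hence no cycles --- so Lemma \ref{chain lemma} applies. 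Without exhibiting this specific structure (or some equivalent presentation), your argument is a plan rather than a proof: the conclusion that density transfers to link complements rests entirely on the unverified claim that the filled components form an acyclic chain.
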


\begin{proof}  
In \cite{Adams6}, it was proved that the set of cusp densities of hyperbolic manifolds is dense in the interval $[0, 0.853\dots]$ by constructing manifolds with cusp density arbitrarily close to any value in the interval.  A manifold with cusp density arbitrarily close to $x \in [0, 0.853\dots]$ is constructed by first finding a link complement $F_{k,n,m}$ with restricted cusp density arbitrarily close to $x$, where restricted cusp density is the ratio of the cusp volume of a subset $\mathcal{C}$ of the components to the volume of the manifold.  When all components of the link except for those in $\mathcal{C}$ are $(1,p)$-Dehn filled, where $p$ can be chosen to be arbitrarily large, the volume and cusp volume of the resulting manifold approach the volume and restricted cusp volume of the original manifold, respectively.  The resulting manifold then has cusp density arbitrarily close to $x$.

It remains to be shown that the manifolds obtained by this construction are in fact link complements.  Let $D_n$ be the $n-$component alternating daisy chain with even $n > 4$, and let $\mathcal{C}_0 = \{C_1,C_2,C_3,C_4\}$ be a set of 4 cusps in $D_n$ such that $C_1,C_2,C_3$ are adjacent and $C_4$ is opposite $C_2$.  Let $D_{n,m}$ be the link obtained by taking an $m-$fold cyclic cover of $D_n$ around $C_4$, and let $\mathcal{C}_1$ be the set of cusps in $D_{n,m}$ that cover those in $\mathcal{C}_0$.  $F_{k,n,m}$ is the belted sum of $D_{n,m}$ and $L_k$, a $k-$fold cover of a link obtained by taking covers of the minimally twisted 5-chain, where the belt in $D_{n,m}$ is a component covering $C_2$.  Let $\mathcal{C}$ be the union of $\mathcal{C}_1$ and the set of all cusps in $L_k$.

\begin{figure}[h]
\centering
 \includegraphics[scale =.3]{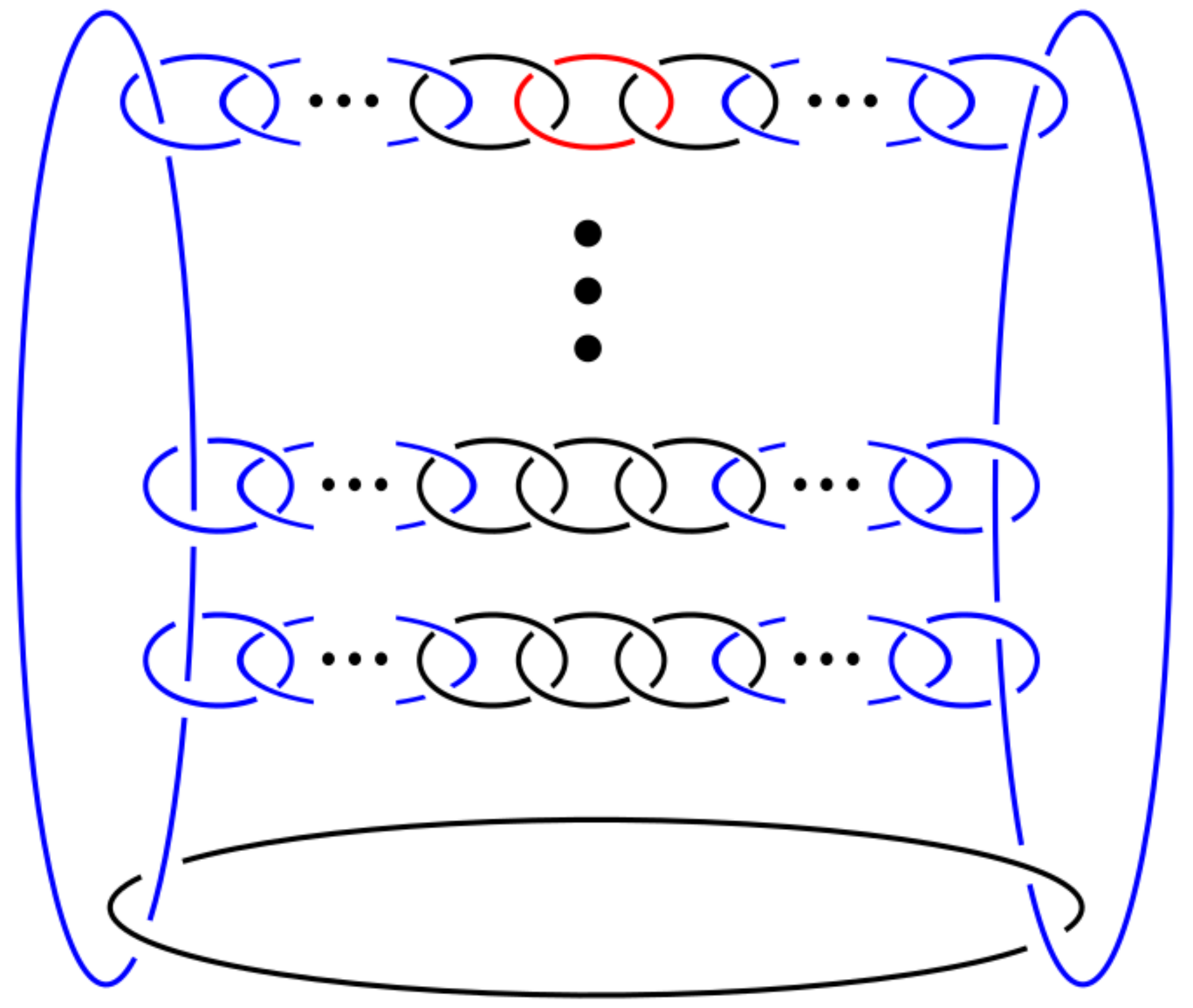}
\caption{ The link $D_{n,m}$.}
\label{dnmlink}
\end{figure}

As shown in Figure \ref{dnmlink},
the components of $F_{k,n,m}$ in the complement of $\mathcal{C}$, all of which lie in $D_{n,m}$ (appearing in blue), form a chain of shape $G$, where $G$ consists of two disjoint trees, which of course contain no cycles.  By Lemma \ref{chain lemma}, the manifold obtained by $(1,p)$-Dehn filling on the components not in $\mathcal{C}$ is a link complement.
\end{proof}

\section{Cusp Density of Knots}

\begin {definition}
Consider a link with two components, the first containing a tangle $T$ which does not create additional components, and the second a trivial component that wraps around two strands of the component containing $T$, as shown in Figure \ref{augmented}.
We call such a link an \emph{augmented tangle link}. When the tangle specifically has an X in it in the sense that the two strands labeled $a$ are connected to one another through the tangle, and the two strands labeled b are connected to one another through the tangle, then we say that the link is an \emph{augmented cross tangle link}.
\end{definition}

\begin{figure}[h]
\centering
 \includegraphics[scale =.5]{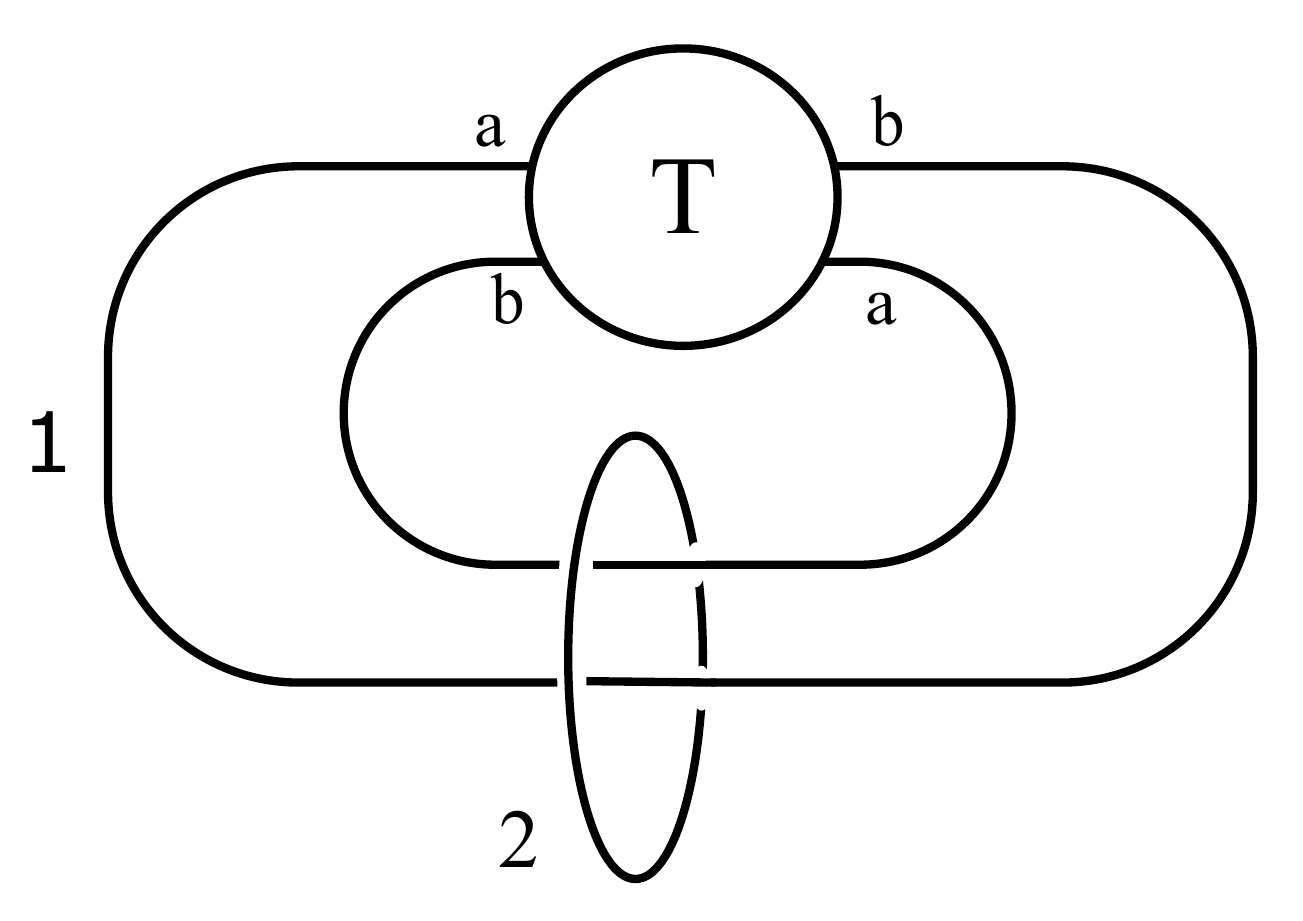}
\caption{ Augmented cross tangle link.}
\label{augmented}
\end{figure}

Note the twice-punctured disc bounded by cusp 2 in the complement of an augmented tangle link. A thrice-punctured sphere (or twice-punctured disk) is known to be totally geodesic and to have a unique hyperbolic structure (see for instance \cite{Adams7}).The disk appears in Figure \ref{twicepunc}(a) with certain edges marked. In Figure \ref{twicepunc}(b), we look at the cusp diagram, centering the horoball of cusp 2 at infinity. The following description of the twice punctured disc in the cusp diagram is crucial to our proofs. 

\begin{figure}
\centering
\begin{subfigure}{.47\textwidth}
  \centering
  \includegraphics[scale=.5]{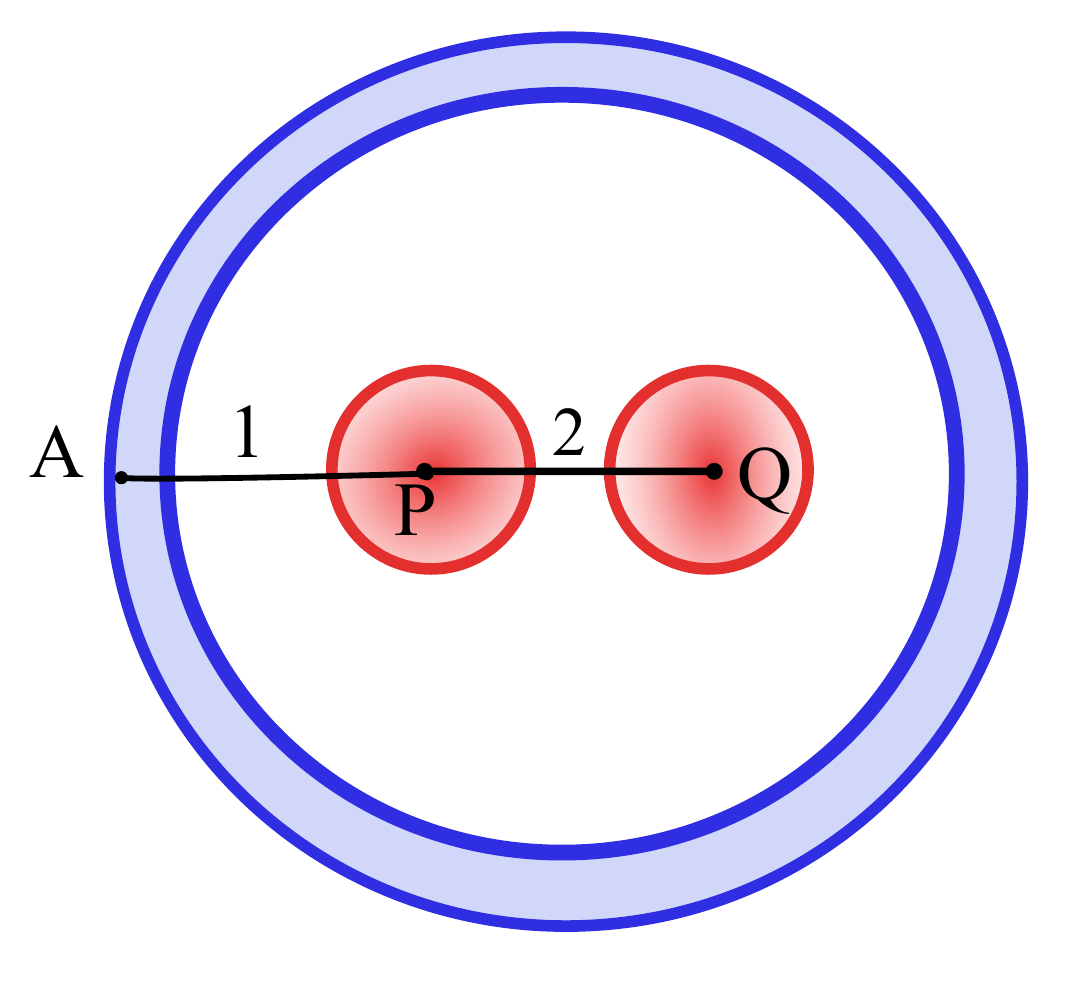}
  \caption{Cross section along cusp 2}
\end{subfigure}
\begin{subfigure}{.47\textwidth}
  \centering
  \includegraphics[scale = .4]{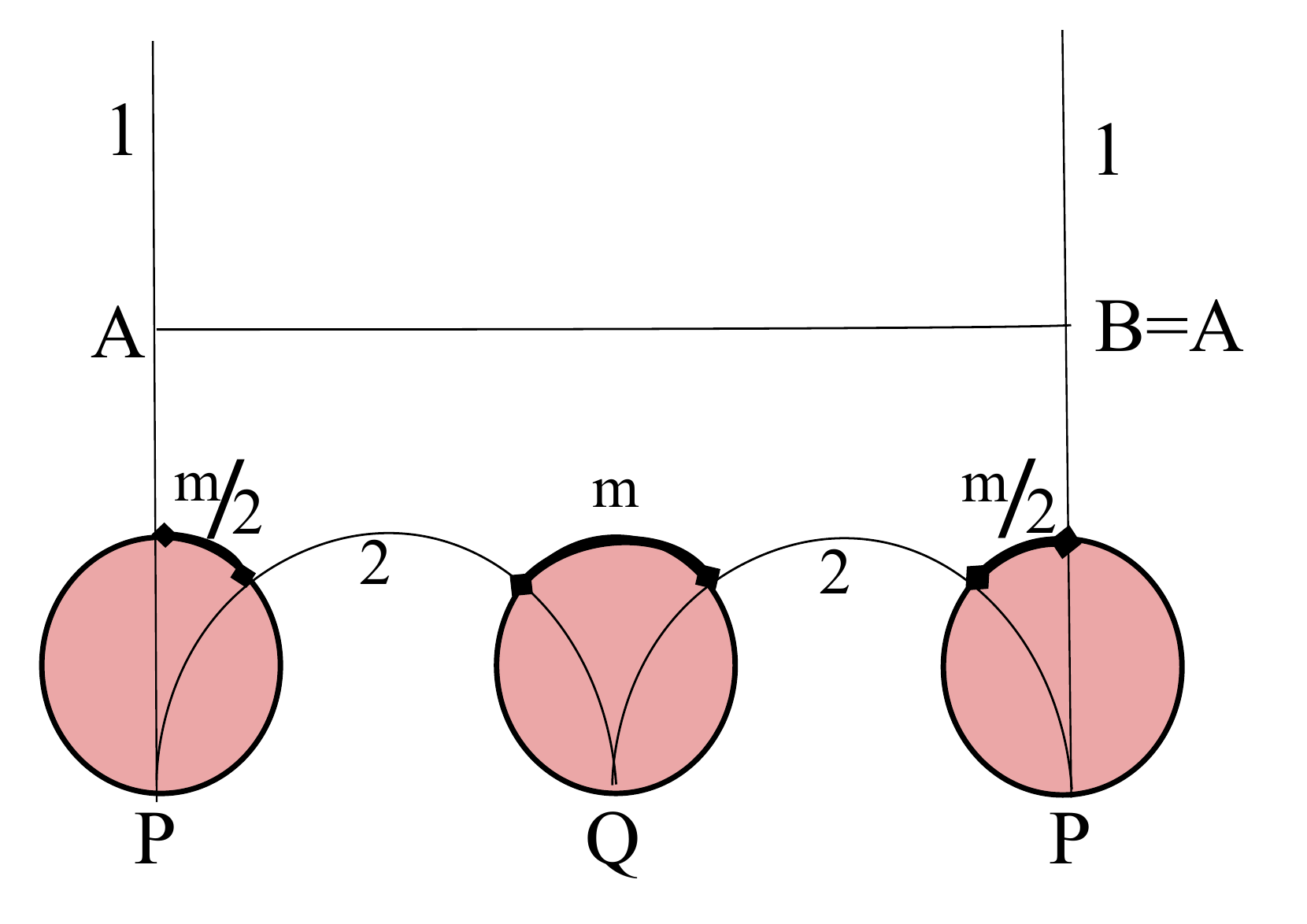}
\caption{Horoball Diagram}
\end{subfigure}
\caption{Twice punctured disc}
\label{twicepunc}
\end{figure}

We center the fundamental domain such that an endpoint of the longitude is directly above the  horoball of Cusp 1 that corresponds to the puncture $P$. There will be an identical ball on the `other' end since they are identified. We have a second puncture $Q$ in the twice-punctured disc and this would correspond to another horoball of Cusp 1 between the two horoballs at the ends of the longitude. This will be equidistant from the two  horoballs centered at $P$ since the geodesic (2) corresponding to $PQ$ should be the same when constructed from either ideal vertex corresponding to $P$. The edge 1 in Figure \ref{twicepunc}(b) appears as a geodesic connecting $P$ and infinity. 

On the leftmost horoball, the distance along the boundary cut off by the geodesics 1 and 2 is half the meridian (the other half is on the rightmost horoball). We also see the meridian as the distance cut off by the two geodesics labeled 2 along the boundary of the middle horoball. Since the meridian is constant along the whole length of cusp 1, all three horoballs are the same size. 

In order to find the interval in which cusp density is dense for knots we must first discuss poking and its implications for our constructions. 

\begin{definition} Given a thrice-punctured sphere $S$ in a hyperbolic 3-manifold $M$, we say that a cusp $C$ \emph{pokes} $S$ if, in the universal cover $\bbH^3$,  there is a horoball $H$ corresponding to $C$ and a geodesic plane $P$  that covers $S$ such that the center of $H$ is not on $\partial P$, but $H$ intersects $P$.  
See Figure \ref{poking}.

We say an augmented tangle link has poking if the cusp containing the tangle pokes  the twice-punctured disk corresponding to the trivial component.
\end{definition}

\begin{figure}{}
\begin{center}
\includegraphics[scale= .5]{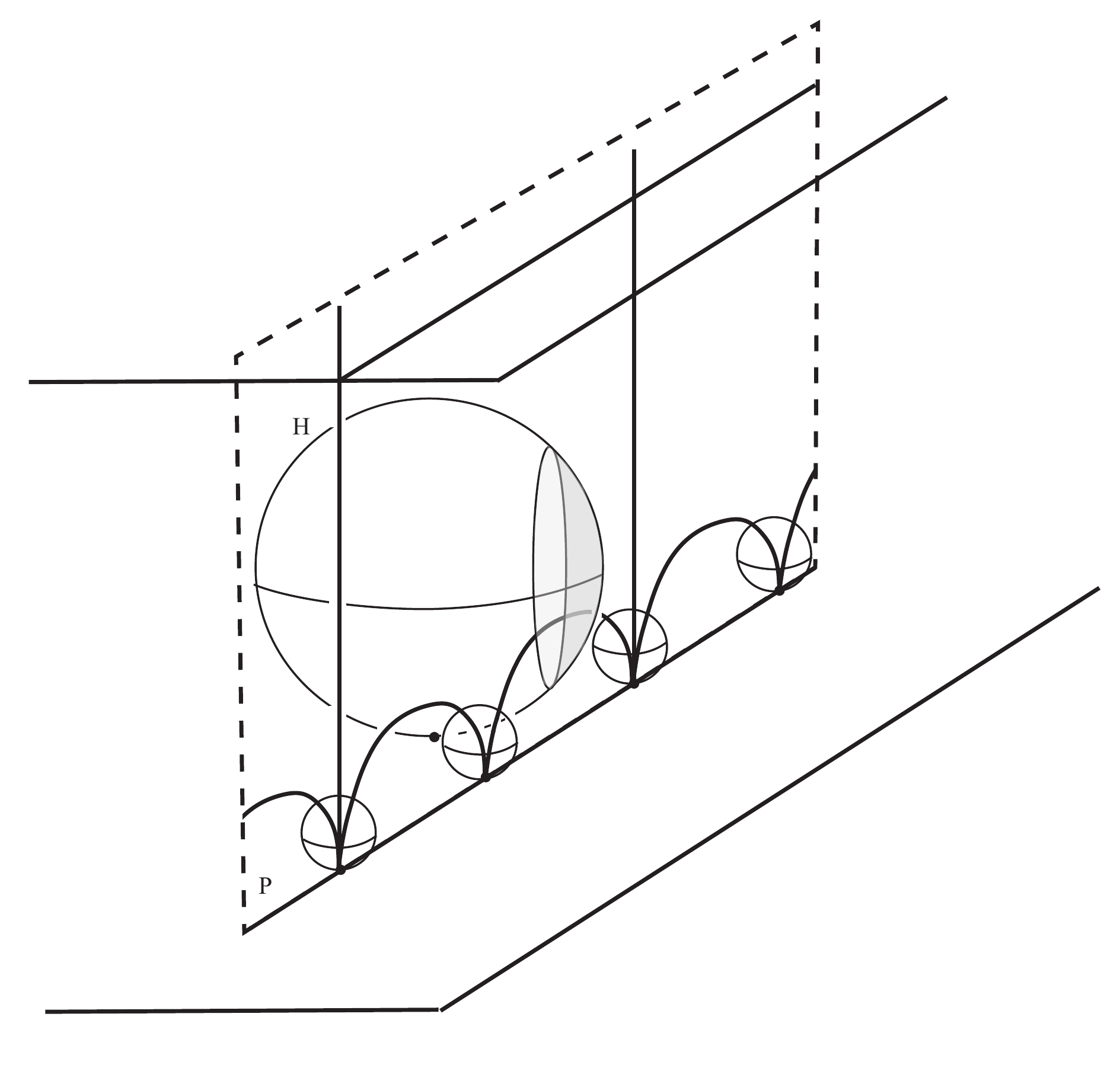}
\caption{Horoball Diagram: Poking across twice-punctured disk}
\label{poking}
\end{center}
\end{figure}

\begin{remark}
Poking has not been encountered in any augmented tangle link we have considered. We wonder whether poking ever occurs for augmented tangle links.  For the following constructions we assume its absence. 
\end{remark}


\begin{lemma} Let $L_1$ and $L_2$ be two augmented cross tangle links, both without poking, and let $L_3$  be their belted sum. Let $C_1$, $C_2$ and $C_3$ be the tangle components of each link.  Choose $m_1$, $m_2$ and $m_3$ to  be the meridian lengths of $C_1$, $C_2$ and $C_3$ when they are maximized first in their respective link complements and let $V_{C_1}$, $V_{C_2}$, and $V_{C_3}$ be their volumes.
Then  if  $m_1 \leq m_2$,  it must be that $m_3 = m_1$ and furthermore, $V_{C_3}=V_{C_1}+{\Big(\frac{m_1}{m_2}\Big) }^2 V_{C_2}$ .
\end{lemma}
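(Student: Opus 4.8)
The plan is to analyze the maximal cusp of $C_3$ by exploiting the fact, from \cite{Adams7}, that the belted sum $L_3$ is obtained by gluing the complements of $L_1$ and $L_2$ along the totally geodesic twice-punctured disks bounded by their belt components, with the hyperbolic structures on the two pieces preserved. Under this gluing the two tangle components are joined into the single component $C_3$, and its cusp is assembled from the cusp of $C_1$ on one side of the totally geodesic twice-punctured disk $D$ and the cusp of $C_2$ on the other. The first step is to invoke the absence of poking: since neither $L_1$ nor $L_2$ has poking, in the universal cover no horoball of the tangle cusp meets a plane $P$ covering $D$ except at the punctures. Hence $P$ cleanly separates the lift of the $C_3$-cusp into horoballs belonging to the $L_1$-side and horoballs belonging to the $L_2$-side, and the portion of the $C_3$-cusp on the $L_i$-side is isometric to the corresponding portion of the $C_i$-cusp in $L_i$.

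Next I would pin down the meridian. Because $C_3$ is a single cusp, expanding its horoball neighborhood scales its boundary torus uniformly, so the meridian length increases synchronously on both sides of $D$; the meridian is a single isotopy class on the one cusp torus, and the description of the twice-punctured disk in the horoball diagram given above shows it is read off identically from either side. As we expand, the $L_1$-side reaches self-tangency precisely when its meridian equals $m_1$ and the $L_2$-side precisely when its meridian equals $m_2$. The no-poking hypothesis guarantees that the only obstructions to expansion are these within-side self-tangencies, not a tangency occurring across $D$. Since $m_1 \leq m_2$, the $L_1$-side tangency occurs first, so the maximal cusp of $C_3$ has meridian $m_3 = m_1$, with the $L_2$-side not yet self-tangent.

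Finally I would compute the volume by additivity across the totally geodesic disk together with the scaling law for cusps. Placing the cusp at infinity in the upper half-space model shows that cusp volume scales as the square of the meridian length as the horoball is expanded or contracted. At the maximal expansion $m_3 = m_1$ the $L_1$-side is exactly the maximal cusp of $C_1$, contributing $V_{C_1}$, while the $L_2$-side is the cusp of $C_2$ contracted from meridian $m_2$ to meridian $m_1$, a factor of $m_1/m_2$ in linear scale and hence $(m_1/m_2)^2$ in volume, contributing $(m_1/m_2)^2 V_{C_2}$. Adding the two sides gives $V_{C_3} = V_{C_1} + (m_1/m_2)^2 V_{C_2}$.

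I expect the crux to be the first step: rigorously justifying that the maximal cusp of $C_3$ decomposes along $D$ into undistorted copies of the $C_1$- and $C_2$-cusps, and in particular that the self-tangency realizing maximality occurs strictly on one side rather than across the disk. This is exactly the role of no-poking, and making precise that no horoball of the tangle cusp pokes $D$ forbids any cross-disk interference, so that the remaining constraints are the within-side tangencies inherited unchanged from $L_1$ and $L_2$. The meridian synchronization and the quadratic volume scaling are then routine consequences.
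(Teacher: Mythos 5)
Your proposal is correct and follows essentially the same route as the paper's proof: both use the gluing of the two hyperbolic structures along the totally geodesic twice-punctured disk from \cite{Adams7}, invoke the absence of poking to rule out cross-disk tangencies so that the only obstructions to expanding the $C_3$-cusp are the self-tangencies inherited from $C_1$ and $C_2$, conclude $m_3 = m_1$ since $m_1 \leq m_2$, and obtain the volume formula by scaling the $C_2$-side down by $\frac{m_1}{m_2}$ linearly, hence $\bigl(\frac{m_1}{m_2}\bigr)^2$ in cusp volume. The only cosmetic difference is that you phrase the meridian step as expanding the cusp until the first self-tangency, whereas the paper phrases it as shrinking the maximal $C_2$-cusp until its meridian matches $m_1$; these are equivalent.
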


\begin{proof}
As in \cite{Adams7}, a fundamental domain for the hyperbolic structure on $L_3$ in $\mathbb{H}^3$ is obtained by gluing together fundamental domains for $L_1$ and $L_2$ along  geodesic faces corresponding to the twice-punctured disks bounded by the trivial component in each. These disks are totally geodesic.  Let $P$ be a geodesic plane in $\mathbb{H}^3$ corresponding to he twice-punctured disks, once glued together.  The relative positions of the centers of horoballs in the fundamental domains for 
$L_1$ and $L_2$ are preserved. The cusp $C_3$ and the horoballs corresponding to it will come from the cusps $C_1$ and $C_2$ and the horoballs corresponding to them, but the sizes of the horoballs from $C_1$ and $C_2$ may need to be adjusted to match accordingly. 

Both $C_1$ and $C_2$ are maximal cusps in $L_1$ and $L_2$. Because there is no poking, there are no pairs of horoballs with centers on opposite sides of $P$ that touch one another. 

The centers of horoballs that are on the boundary of $P$ are now shared between the fundamental domains of $L_1$ and $L_2$, as they come from $C_1$ being glued to $C_2$ on the twice-punctured disk.
The cusps  are glued to each other in a way that takes a meridian on the twice punctured disk in $C_1$ to a meridian on the twice punctured disk of $C_2$. Note that neither $m_1$ nor $m_2$ can be made any larger when they appear in $L_3$ since horoballs that do not poke across the twice-punctured disk touch one another in the fundamental domains for $L_1$ and $L_2$ that we are using to construct the fundamental domain for $L_3$. It follows that the meridian $m_3$ of $C_3$ cannot be larger than either $m_1$ or $m_2$. As we assumed that $m_1 \leq m_2$,we must shrink the cusp $C_2$ back until its meridian matches with $m_1$, and this means that $m_3 = m_1$. 

This  implies that the part of $C_3$ that corresponds to $C_2$ from before is the same except with the meridian scaled down by a factor of $\frac{m_1}{m_2}$. Since the cusp shape is similar to before, the longitude is also scaled down by a factor of $\frac{m_1}{m_2}$. Therefore, as the cusp volume is directly proportional to cusp area, the new cusp volume corresponding to the $C_2$ part of $C_3$ is scaled down by $\big(\frac{m_1}{m_2}\big)^2$ and so is equal to $\big(\frac{m_1}{m_2}\big)^2V_{C_2}$. As the meridian of $C_1$ is unchanged under the gluing, the cusp volume of the part of $C_3$ corresponding to $C_1$ remains the same. Therefore, as $C_3$ is just the union of the parts corresponding to $C_1$ and $C_2$, we have that $V_{C_3} = V_{C_1} + \big(\frac{m_1}{m_2}\big)^2V_{C_2}$.  
\end{proof}

\begin{lemma}
\label{knot density}
Let $L_1$ and $L_2$ be  augmented cross tangle links with no poking and tangle components $C_1$ and $C_2$ of respective restricted cusp density $cd_1$ and $cd_2$ and meridians of lengths $m_1$ and $m_2$ where $m_1\leq m_2$. Then the cusp densities of knot complements are dense in $[cd_1,{\big(\frac{m_1}{m_2}\big)}^2cd_2]$ if $cd_1 \leq {\big(\frac{m_1}{m_2}\big)}^2cd_2$, or in $[{\big(\frac{m_1}{m_2}\big)}^2cd_2,cd_1]$ if ${\big(\frac{m_1}{m_2}\big)}^2cd_2 < cd_1$.
\end{lemma}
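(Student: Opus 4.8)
The plan is to realize every value in the stated interval as a limit of cusp densities of knots obtained by belt-summing many copies of $L_1$ and $L_2$ and then filling away the belts. Concretely, I would fix positive integers $a$ and $b$ and form the belted sum $\mathcal{L}_{a,b}$ of $a$ copies of $L_1$ and $b$ copies of $L_2$, arranged in a chain so that the tangle components fuse into a single circle while the belts remain as finitely many trivial components. This is exactly where the cross (X) structure is used: composing cross tangles keeps the strands joined into one component. Since the belted sum of two augmented cross tangle links is again an augmented cross tangle link, the previous lemma applies at each stage and can be iterated.

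First I would compute the two geometric quantities for $\mathcal{L}_{a,b}$. By the additivity of volume under belted sums (from \cite{Adams7}), $vol(\mathcal{L}_{a,b}) = a\,vol(L_1) + b\,vol(L_2)$. For the cusp volume of the fused tangle cusp I would apply the previous lemma repeatedly. Because $m_1 \le m_2$, the meridian of the fused cusp equals the global minimum $m_1$ at every stage; each copy of $L_1$ therefore contributes its full $V_{C_1}$ (scale factor $(m_1/m_1)^2 = 1$), while each copy of $L_2$ contributes $(m_1/m_2)^2 V_{C_2}$. Hence the maximized restricted cusp volume of the fused cusp is $a\,V_{C_1} + b\,(m_1/m_2)^2 V_{C_2}$.

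Next I would pass from the link to a knot by $(1,p)$-Dehn filling every belt component of $\mathcal{L}_{a,b}$. By the Remark, each such filling stays in $S^3$ and removes a belt, so for large $p$ the result is a hyperbolic knot $K_{a,b,p}$ with a single cusp coming from the fused tangle component. As in the proof of the links density theorem above, letting $p \to \infty$ makes $vol(K_{a,b,p}) \to vol(\mathcal{L}_{a,b})$ and $cv(K_{a,b,p})\to a\,V_{C_1} + b\,(m_1/m_2)^2 V_{C_2}$, so
\[
cd(K_{a,b,p}) \longrightarrow \frac{a\,V_{C_1} + b\,(m_1/m_2)^2 V_{C_2}}{a\,vol(L_1) + b\,vol(L_2)}.
\]
Writing $\alpha = a\,vol(L_1)$ and $\beta = b\,vol(L_2)$ and using $cd_i = V_{C_i}/vol(L_i)$, the right-hand side becomes the convex combination
\[
\frac{\alpha}{\alpha+\beta}\,cd_1 + \frac{\beta}{\alpha+\beta}\Big(\tfrac{m_1}{m_2}\Big)^2 cd_2 .
\]
As $a,b$ range over the positive integers, $\alpha/\beta = \tfrac{vol(L_1)}{vol(L_2)}\cdot\tfrac{a}{b}$ takes a dense set of values in $(0,\infty)$, so the weight $\alpha/(\alpha+\beta)$ is dense in $(0,1)$ and the convex combination is dense in the interval between $cd_1$ and $(m_1/m_2)^2 cd_2$. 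A diagonal argument over the parameters $(a,b)$ and $p$ then yields knots whose cusp densities are dense in the claimed closed interval.

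The main obstacle I anticipate is the topological bookkeeping of the iterated belted sum: one must check that chaining the cross tangles really does produce a single-component tangle cusp (so that filling the belts gives a knot rather than a multi-component link) and that the belted sum of augmented cross tangle links is again of the same type, so that the previous lemma legitimately iterates with the fused meridian staying equal to $m_1$ throughout. A secondary point requiring care is the convergence of the \emph{maximal} cusp volume of $K_{a,b,p}$ to the restricted cusp volume as $p\to\infty$: one must argue that for large $p$ the short core geodesics created by the fillings lie deep in the manifold and do not obstruct the maximal cusp of the surviving tangle cusp, which is the same geometric-convergence input already invoked for links above.
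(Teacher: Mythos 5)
Your proposal follows the paper's strategy almost exactly: form belted sums of $a$ copies of $L_1$ and $b$ copies of $L_2$, use the preceding lemma to get the fused tangle cusp volume $aV_{C_1}+b\big(\frac{m_1}{m_2}\big)^2V_{C_2}$ with meridian $m_1$, use additivity of volume under belted sums, pass to knots by high $(1,p)$-Dehn filling, and let the ratio $a/b$ sweep out the interval. However, one of your structural claims is false, and it is precisely the point the paper has to handle with care. It is not true that ``composing cross tangles keeps the strands joined into one component,'' nor that the belted sum of two augmented cross tangle links is again an augmented cross tangle link. A cross tangle acts on the two strands passing through the belt as a transposition; composing two of them gives the identity permutation, so a belted sum of an \emph{even} number of cross tangle summands has a tangle part consisting of \emph{two} circles (three components in all, counting the belt), and filling the belt then yields a two-component link, not a knot. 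The tangle part is a single circle exactly when the total number of summands is odd, which is why the paper's proof explicitly restricts to $k+p$ odd. Your density argument, which lets $a,b$ range over all positive integers, must be restricted to pairs with $a+b$ odd; fortunately such ratios $a/b$ are still dense in $(0,\infty)$, so the convex-combination computation survives this restriction, exactly as in the paper. Note also that your justification for iterating the previous lemma (closure of the class under belted sum) fails for the same parity reason; the correct justification is that the proof of that lemma never uses the cross structure, only the twice-punctured disk geometry, absence of poking, and meridian comparison, so it applies to each successive belted sum even when the intermediate links are not cross tangle links.

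A secondary misconception: in a belted sum the two twice-punctured disks are glued to one another, so the belt components are identified and the iterated belted sum has exactly \emph{one} belt, not ``finitely many trivial components.'' This does not break your argument---there is simply a single trivial component to Dehn fill---but as written your chaining description suggests a connected-sum-like construction that is not the belted sum of \cite{Adams7}, and the additivity of volume and the cusp-volume formula you invoke are established only for the genuine belted sum.
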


\begin{proof}
Let $L_{k,p}$ be the link that results from taking a belted sum of $k$ copies of $L_1$ and $p$ copies of $L_2$. Let $C_{k,p}$ be its tangle component and let $cd_{C_{k,p}}$ be its restricted cusp density. Note that when $k + p$ is odd then $L_{k,p}$ is a two component link where one of the components bounds the  twice-punctured disk.Then there exists a knot with cusp density arbitrarily close to $cd_{C_{k,p}}$ that is obtained by doing high $(1,p)$-Dehn filling on the trivial component of $L_{k,p}$. 

We will assume to begin that $cd_1 \leq {\big(\frac{m_1}{m_2}\big)}^2cd_2$. Now, for $k,p \geq 1$,  $L_{k,p}$ will have $m_{k,p} = m_1$ since $m_1$ is the smallest of all the meridians of the belted sum components. Therefore, all $p$ of the  $L_2$ components of the belted sum will add volume ${\big(\frac{m_1}{m_2}\big)}^2V_{C_2}$ to $V_{C_{k,p}}$. Additionally, all $k$ copies of $L_1$ will add volume $V_{C_1}$ to $V_{C_{k,p}}$. Thus,  $V_{C_{k,p}}= kV_{C_1} + p {\big(\frac{m_1}{m_2}\big)}^2V_{C_2}$. As volumes add under belted sums, the restricted cusp density is given by: 
\[cd_{C_{k,p}} = \frac{kV_{C_1} + p {\big(\frac{m_1}{m_2}\big)}^2V_{C_2}}{k \cdot vol(L_1) + p \cdot vol(L_2)}.\] 

Divide by $k$ in the numerator and denominator, and let $t = \frac{p}{k}$ to obtain: \[ f(t)= \frac{V_{C_1} + t {\big(\frac{m_1}{m_2}\big)}^2V_{C_2}}{vol(L_1) + t \cdot vol(L_2)}.\]
Then $\lim\limits_{t\to\infty} f(t) = \frac{\big(\frac{m_1}{m_2}\big)^2V_{C_1}}{vol(L_2)} = \big(\frac{m_1}{m_2}\big)^2 cd_2$, and $\lim\limits_{t\to 0} f(t)= \frac{V_{C_1}}{vol(L_1)} = cd_1$. Additionally as $f(t)$ is a continuous function of $t$, it takes on all values in $[c_1,{\big(\frac{m_1}{m_2}\big)}^2c_2]$ . As $k$ and $p$ are integers, for any value of $t$ we can find a $k$ and a $p$ such that $\frac{k}{p}$ is arbitrarily close to $t$, and furthermore we can do so such that $k+p$ is odd. As a knot can approach any of these values, the cusp densities of knots are dense in $[c_1,{\big(\frac{m_1}{m_2}\big)}^2c_2]$.

When ${\big(\frac{m_1}{m_2}\big)}^2c_2 < c_1$, a similar argument holds to show knot cusp densities are dense in the interval $[{\big(\frac{m_1}{m_2}\big)}^2c_2,c_1]$.
\end{proof}

Now we would like to find two links satisfying the conditions of Lemma \ref{knot density} that give the largest interval in which cusp densities are dense. Just as good for our purposes, we will instead find two families of augmented cross tangle links with no poking that each have meridians of the tangle component approaching 2, and have restricted cusp densities approaching 0 and $.6826\dots$ respectively. 

In order to describe the family of augmented cross tangles with no poking that have restricted cusp density approaching 0, we utilize the alternating daisy chains. Denote such a chain with $n$ components $D_n$. Let $M_n$ denote a choice of a single cusp, individually maximized in $D_n$. As shown in \cite{Adams6}, as $n$ approaches infinity, $M_n$ approaches the maximized cusp of the Borromean rings in structure. This means that the meridian $m_n$ of $M_n$ approaches 2 and the restricted maximized cusp volume $cv_R(D_n,M_n)$ approaches 4. Additionally, as $n$ approaches infinity, $vol(D_n)$ approaches infinity, and so $cd_R(D_n,M_n)$ approaches 0. Additionally, as $M_n$ approaches in structure the Borromean rings maximized cusp, and as the Borromean rings have no poking, for sufficiently large $n$, neither will $M_n$. 

Now, let $D_{n,p}$ be the manifold that comes from performing $(1,p)$-Dehn filling on every component of $D_n$ except for $M_n$ and the two components of $D_n$ that are adjacent to and linked with $M_n$. Let $M_{n,p}$ be the  component from the new manifold corresponding to $M_n$. As $p$ approaches infinity, $D_{n,p}$ will approach $D_n$ in volume, and the three cusps $M_{n,p}$ and the two surrounding cusps will approach their previous volumes and structures before filling. By Lemma $\ref{chain lemma}$, we know that $D_{n,p}$ is a link complement. Specifically $D_{n,p}$ will have three cusps and since the component $M_{n,p}$ is not Dehn filled and neither are the surrounding two components, $M_{n,p}$ will remain a trivial component that bounds a twice-punctured disk. 

Now for sufficiently large $n$ and $p$, $M_{n,p}$ will have no poking as it approaches in structure $M_n$, and therefore we can insert a half-twist through the twice-punctured disk bounded by $M_{n,p}$ that connects the other two cusps, but otherwise does not change any cusp structure but to unify the previous two cusps assuming we are in the case with no poking. Call this new two-component link $L_{n,p}$, and note that now $L_{n,p}$ is in augmented cross tangle form. Additionally, it is still maintained that as $n$ and $p$ approach infinity, the restricted cusp density of $L_{n,p}$ for both components approaches 0, and additionally, for sufficiently high $n$ and $p$, $L_{n,p}$ has no poking. Discard all $L_{n,p}$ that possibly have poking, and call this remaining family of augmented cross tangles with restricted cusp density approaching 0 and no poking $F_1$.

To construct the family with high cusp density, start with the maximally twisted four chain. The complement of the maximally twisted four chain is composed of ten ideal regular tetrahedron and so it has volume $10.149\dots = 10v_{tet}$. Depending on the order of maximization, opposite cusps have volumes of $2\sqrt{3}$ and $\frac{\sqrt{3}}{2}$ respectively. Additionally the meridian of the first maximized components is exactly 2. Then, insert a half-twist through one of the twice punctured disks in order to connect the two cusps that each have a volume of $2\sqrt{3}$. The resulting link complement will still be made out of ten ideal regular tetrahedron and the cusps will be unchanged except that the two previous opposite cusps are joined so that they are a single cusp with volume $2\sqrt{3} + 2\sqrt{3} = 4\sqrt{3}$ and this larger component still has a meridian of 2. Therefore the restricted cusp density of the largest component after inserting a half-twist is equal to $\frac{4\sqrt{3}}{10v_{tet}} = .6826\dots$. 

Now perform $(1,p)$-Dehn filling on the trivial component opposite the one we put a half-twist through in order to obtain links that have restricted cusp densities approaching $.6826\dots$, and meridians of the tangle component approaching 2. Note that all such Dehn fillings are exactly the family of augmented cross tangle links that we desire. Additionally, for a high enough $p$, all links in the family that come from $(1,p)$-Dehn filling will have no poking as they approach having a perfect wall of full-sized balls along the twice-punctured disk component because they approach a manifold made only from ideal regular tetrahedra with the maximum ball packing allowed in hyperbolic space. Call this family of links $F_2$.

\begin{theorem}
Cusp densities of knots are dense in $[0,0.6826\dots]$.
\end{theorem}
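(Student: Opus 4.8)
The plan is to assemble the theorem from the two ingredients already developed in this section: the density statement of Lemma~\ref{knot density}, applied to one link $L_1$ drawn from the family $F_1$ and one link $L_2$ drawn from the family $F_2$, followed by a limiting argument as the defining parameters of both families tend to infinity. Recall that every member of $F_1$ is an augmented cross tangle link with no poking whose tangle component has meridian tending to $2$ and restricted cusp density tending to $0$, while every member of $F_2$ is an augmented cross tangle link with no poking whose tangle component has meridian tending to $2$ and restricted cusp density tending to $0.6826\dots = \frac{4\sqrt{3}}{10v_{tet}}$. Thus both families produce links satisfying the hypotheses of Lemma~\ref{knot density}, with the decisive feature that the tangle meridians of \emph{both} families share the common limit $2$.

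Given $\varepsilon>0$, I would fix representatives $L_1\in F_1$ and $L_2\in F_2$, with tangle meridians $m_1,m_2$ and restricted cusp densities $cd_1,cd_2$, by taking the family parameters large enough that $cd_1<\varepsilon$, $\lvert cd_2 - 0.6826\dots\rvert<\varepsilon$, and $\lvert m_1-2\rvert,\lvert m_2-2\rvert<\varepsilon$. After relabeling so that the smaller meridian is called $m_1$, Lemma~\ref{knot density} applies and yields density of knot cusp densities in a closed interval $I_\varepsilon\subseteq[0,0.6826\dots]$ whose endpoints are $cd_1,\,(m_1/m_2)^2 cd_2$ (in one of the two orders). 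Because $m_1/m_2\to 1$ as $\varepsilon\to 0$, the factor $(m_1/m_2)^2$ is within a controlled amount of $1$, so one endpoint of $I_\varepsilon$ lies within $O(\varepsilon)$ of $0$ and the other within $O(\varepsilon)$ of $0.6826\dots$; this holds whether the low- or the high-density link ends up carrying the smaller meridian, since in the latter case the second case of Lemma~\ref{knot density} produces an interval with the same two endpoints. Hence the left endpoints of $I_\varepsilon$ converge to $0$ and the right endpoints converge to $0.6826\dots$ as $\varepsilon\to 0$.

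Finally I would upgrade density on each $I_\varepsilon$ to density on the whole interval. Fix a target $x\in[0,0.6826\dots]$ and $\delta>0$. Choosing $\varepsilon$ small enough that $I_\varepsilon$ contains a point within $\delta/2$ of $x$ (possible precisely because the endpoints of $I_\varepsilon$ converge to $0$ and $0.6826\dots$), and then using that knot cusp densities are dense in $I_\varepsilon$ to find a knot whose cusp density lies within $\delta/2$ of that point, produces a knot whose cusp density is within $\delta$ of $x$. This gives density in $[0,0.6826\dots]$.

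Since the substantive work—the belted-sum cusp-volume computation giving $V_{C_3}=V_{C_1}+(m_1/m_2)^2V_{C_2}$, the density conclusion of Lemma~\ref{knot density}, and the explicit constructions of $F_1$ from alternating daisy chains and of $F_2$ from the maximally twisted four-chain—has already been carried out, the only genuine care needed here is the endpoint bookkeeping of the limiting argument. The main point to verify is that the attainable top of the interval truly approaches $0.6826\dots$ rather than a strictly smaller value; this rests entirely on $m_1/m_2\to 1$, i.e.\ on both families having tangle meridians converging to the \emph{same} limit $2$. Were the two limiting meridians different, the scaling factor $(m_1/m_2)^2$ would depress the top of the interval and the construction would only yield density in a proper subinterval of $[0,0.6826\dots]$.
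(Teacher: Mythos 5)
Your proposal is correct and follows essentially the same route as the paper: the paper's (very terse) proof likewise takes links $l_1\in F_1$ and $l_2\in F_2$ with restricted cusp densities arbitrarily close to $0$ and $0.6826\dots$ and invokes Lemma~\ref{knot density} to conclude density. Your write-up simply makes explicit the endpoint bookkeeping the paper leaves implicit—in particular the key observation that both families' tangle meridians tend to the common value $2$, so the factor $\left(\frac{m_1}{m_2}\right)^2$ tends to $1$ and does not depress the top of the interval.
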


\begin{proof}
There exist links $l_1\in F_1$ and $l_2\in F_2$ arbitrarily close in cusp density to 0 and $.6826\dots$ respectively, and both families consist of links that satisfy the conditions of Lemma $\ref{knot density}$. Therefore cusp densities of knots are dense in $[0,.6826\dots]$. 
\end{proof}


\section{Cusp Crossing Density}

In this section we consider cusp crossing density of hyperbolic links, given as the ratio of maximum cusp volume to crossing number. 

\begin {lemma} The cusp crossing densities of hyperbolic knots and links are bounded above by $\frac{\sqrt{3} v_{oct}}{2 v_{tet}} \approx 3.1263\dots$ and there exist families of knots with cusp crossing density approaching $0$. 
\end{lemma}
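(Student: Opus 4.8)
The plan is to obtain the upper bound by factoring cusp crossing density as a product of two invariants whose bounds are already available. Writing
$$d_{cc}(L) = \frac{cv(L)}{c(L)} = \frac{cv(L)}{vol(L)}\cdot\frac{vol(L)}{c(L)} = cd(L)\cdot d_{vol}(L),$$
I would bound each factor separately. The first factor is the cusp density, which by B\"or\"oczky's horoball packing bound (cf. \cite{Bor}, \cite{Mey2}) satisfies $cd(L) \le \frac{\sqrt{3}}{2v_{tet}}$. The second factor is the volume density, which satisfies $d_{vol}(L) \le v_{oct}$ since the volume densities of all hyperbolic link complements lie in $[0, v_{oct}]$, as recalled in Section 1 (cf. \cite{Adams10}, \cite{CKP1}, \cite{CKP2}). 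Multiplying the two bounds yields
$$d_{cc}(L) \le \frac{\sqrt{3}}{2v_{tet}}\cdot v_{oct} = \frac{\sqrt{3}\,v_{oct}}{2v_{tet}} \approx 3.1263\dots,$$
which is exactly the claimed value. The only care needed here is to confirm that both constituent bounds apply to \emph{arbitrary} hyperbolic knots and links — which they do — so that no additional hypotheses are smuggled in.

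For the second assertion, I would exhibit an explicit family of hyperbolic knots whose crossing number diverges while the cusp volume stays bounded. A convenient choice is the twist knots $K_n$, obtained by $1/n$ Dehn filling on one component of the Whitehead link, or more generally any family produced by inserting $n$ full twists into a fixed twist region of a fixed diagram. The crossing number $c(K_n)$ grows linearly in $n$, hence tends to infinity. On the other hand, by Thurston's Dehn surgery theorem the complements $K_n$ converge geometrically to the finite-volume Whitehead link complement, so $vol(K_n)$ converges to a finite limit and is in particular bounded by some constant. Rather than tracking the maximal cusp directly through the fillings — where its shape could change or be affected by the short core geodesic — I would simply combine the boundedness of volume with the cusp density bound to get $cv(K_n) \le \frac{\sqrt{3}}{2v_{tet}}\,vol(K_n) \le M$ for a constant $M$ independent of $n$. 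Therefore
$$d_{cc}(K_n) = \frac{cv(K_n)}{c(K_n)} \le \frac{M}{c(K_n)} \longrightarrow 0.$$

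The upper bound argument is essentially immediate once the factorization is written down, so I do not expect it to be the hard part; its value lies in recognizing that $d_{cc}$ splits as cusp density times volume density. The main point requiring attention is in the vanishing family: one must ensure a genuinely diverging crossing-number family of hyperbolic knots exists with uniformly bounded volume, and that the chosen diagrams actually realize (or at least grow with) the crossing number. For twist knots this is standard, so the obstacle is minor. I would also note, consistent with the discussion in the introduction, that the product bound $\frac{\sqrt{3}\,v_{oct}}{2v_{tet}}$ is almost certainly not sharp, since achieving it would require simultaneously realizing the extremal horoball packing density and the extremal octahedral volume density along a single family; establishing the conjectured true bound of $3$ would require a genuinely different and more delicate argument than the one sketched here.
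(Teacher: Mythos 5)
Your proposal is correct and follows essentially the same route as the paper: the upper bound comes from the identical factorization $d_{cc}(L) = cd(L)\cdot d_{vol}(L)$ with B\"or\"oczky's packing bound and the $v_{oct}$ volume density bound, and the vanishing family is the same twist-knot family obtained by Dehn filling the Whitehead link. Your only deviation — bounding $cv(K_n)$ by the cusp density bound times the (convergent) volume rather than invoking convergence of the cusp volumes themselves under Dehn filling — is a harmless and slightly more self-contained justification of the same step.
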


\begin{proof}
By definition $0 < d_{cc}(L)$. We find a family of knots with cusp crossing density approaching $0$. Consider the Whitehead link. Performing $(1,q)$-Dehn filling on one component results in the twist knot family, $Tw_q$, where crossing number is $2q + 2$. By Thurston's Hyperbolic Dehn Filling Theorem (cf. \cite{Thurston}), as $q$ becomes sufficiently large, the cusp volumes of the twist knots approach that of one component of the Whitehead link. Thus $\lim\limits_{q \rightarrow \infty} d_{cc}(Tw_q) = \lim\limits_{q \rightarrow \infty} \frac{cv(Tw_q)}{c(Tw_q)}= 0$. 

\begin{figure}[h]
\begin{center}
\includegraphics[scale=.5]{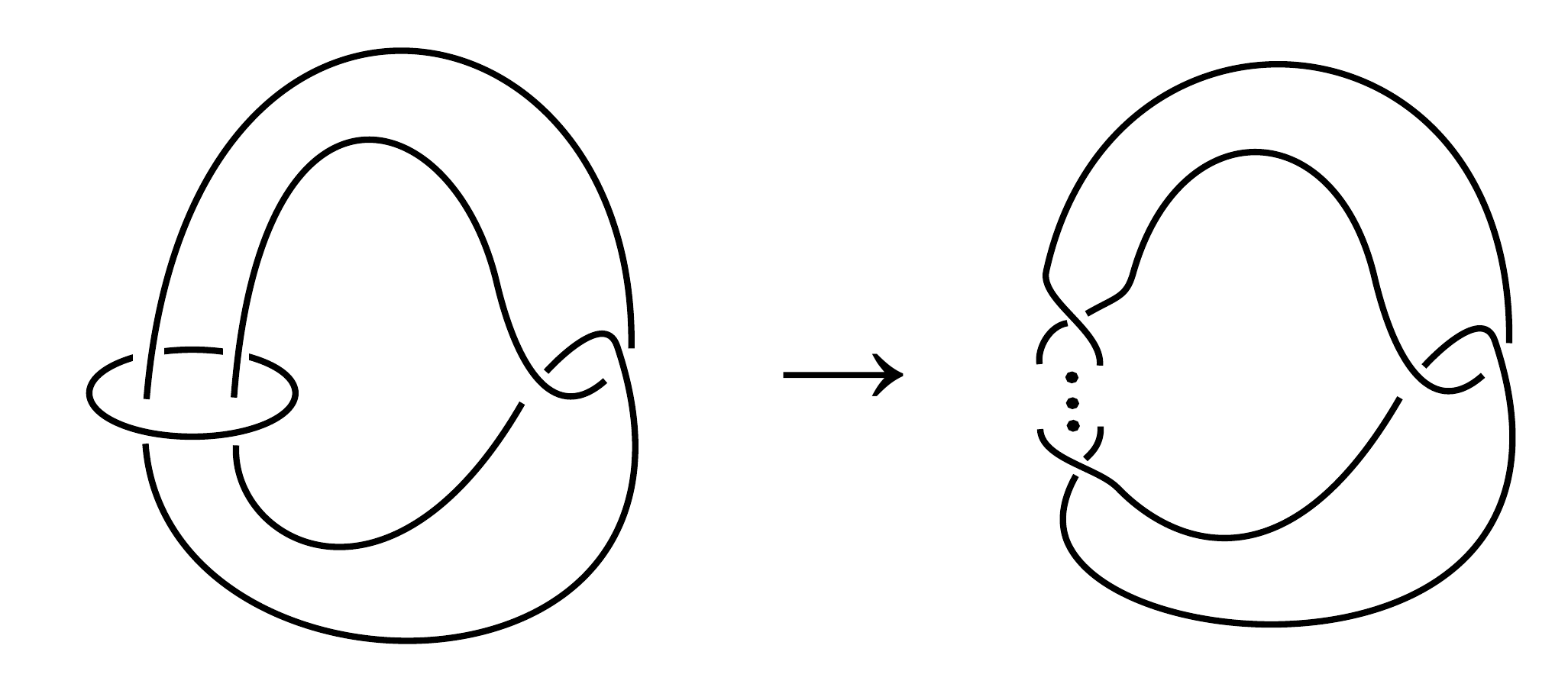}
\caption{The Whitehead link Dehn fills to become the twist knot family.}
\end{center}
\end{figure}

We obtain our upper bound from previous results about packing and cusp density.

 Recall that the densest packing of horoballs in hyperbolic space is $\frac{\sqrt{3}}{2v_{tet}} \approx .853$. Thus $\frac{cv(L)}{vol(L)}\leq .853\dots$.  Since volume density is bounded above by $v_{oct}$ (which follows from a result of D. 
 Thurston, see \cite{Adams3} for an explanation), $\frac{vol(L)}{c(L)} \leq v_{oct}$. Therefore, for any hyperbolic link $L$, \[d_{cc}(L)= \frac{cv(L) v_{oct}}{c(L)v_{oct}}\leq \frac{cv(L)}{vol(L)}v_{oct}\leq (.853\dots)(v
_{oct})\approx 3.1263.\]
\end{proof}

\begin{lemma}There exists families of links with cusp crossing density approaching 3 from below.
\end{lemma}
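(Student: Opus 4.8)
The plan is to produce an explicit family of links built, in the limit, from regular ideal octahedra, and to exploit the factorization
$d_{cc}(L) = \frac{cv(L)}{vol(L)} \cdot \frac{vol(L)}{c(L)}$.
If a family can be found for which the volume density $\frac{vol(L)}{c(L)} \to v_{oct}$ while the cusp density $\frac{cv(L)}{vol(L)} \to \frac{3}{v_{oct}}$, then $d_{cc} \to \frac{3}{v_{oct}} \cdot v_{oct} = 3$. The natural vehicle is the family of weaving links $W(p,q)$: by \cite{CKP1} and \cite{CKP2} their complements converge geometrically, as $p,q \to \infty$, to the infinite weave $\mathbb{W}$, a manifold tiled by regular ideal octahedra with exactly one octahedron per crossing. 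This convergence is what will deliver both density limits at once, since it forces the maximal cusps of $W(p,q)$ to approximate those of $\mathbb{W}$.

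The first concrete step is to pin down the cusp density of the octahedral packing, which is where the number $3$ comes from. Placing a regular ideal octahedron with ideal vertices at $0, \infty, 1, -1, i, -i$, every dihedral angle equals $\frac{\pi}{2}$ and every cusp cross-section is a Euclidean square. In the symmetric maximal horoball packing (all six horoballs mutually tangent) the horoball at $\infty$ sits at height $\sqrt{2}$, and its cross-section inside the octahedron is the square with vertices $\pm 1, \pm i$, of Euclidean area $2$. Hence the horoball volume inside the octahedron at that vertex is $2 \cdot \frac{1}{2(\sqrt{2})^{2}} = \frac{1}{2}$, and by symmetry the six vertices contribute a total of $3$ per octahedron. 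Therefore the octahedral packing has cusp density exactly $\frac{3}{v_{oct}}$, consistent with the earlier upper bound $\frac{\sqrt{3}\,v_{oct}}{2 v_{tet}} \approx 3.1263$ not being attained.

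The second step is to transfer this to the links. Because $W(p,q) \to \mathbb{W}$ geometrically and the octahedra are glued along totally geodesic faces, the maximal cusps of $W(p,q)$ approximate the symmetric octahedral packing on the region away from the boundary of the weave; combined with the volume-density statement of \cite{CKP1}, this yields $\frac{cv(W(p,q))}{vol(W(p,q))} \to \frac{3}{v_{oct}}$ and hence $d_{cc}(W(p,q)) \to 3$. For the approach \emph{from below}, I would use that the volume density satisfies $vol(W(p,q)) < v_{oct}\, c(W(p,q))$ strictly, together with the fact that $\frac{3}{v_{oct}}$ is the maximal cusp density of any manifold built from regular ideal octahedra, to argue that the product cannot overshoot $3$.

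The main obstacle is making this cusp convergence quantitative. The volume-density limit may be cited as a black box from \cite{CKP1} and \cite{CKP2}, but the cusp-density limit requires showing that the maximal horoball packing of $W(p,q)$ genuinely converges to the symmetric octahedral packing of $\mathbb{W}$, including controlling the octahedra near the ends of the weave, whose cusp shapes deviate from the regular ideal model, and verifying that these boundary octahedra contribute a vanishing fraction of both the total volume and the total cusp volume. The most delicate point is the \emph{from below} assertion: since the finite weaving link complements are only \emph{asymptotically} octahedral, the clean octahedral-packing bound $\frac{3}{v_{oct}}$ does not apply verbatim to each finite member, so I would need an explicit estimate showing the finite cusp densities do not exceed $\frac{3}{v_{oct}}$, and it is here that I expect to spend the most effort.
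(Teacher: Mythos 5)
Your factorization $d_{cc}(L) = \frac{cv(L)}{vol(L)}\cdot\frac{vol(L)}{c(L)}$ and your computation that the symmetric octahedral packing contributes cusp volume $3$ per regular ideal octahedron (hence cusp density $3/v_{oct}$) are both correct, and this is exactly the numerology underlying the paper's construction. The gap is your choice of vehicle: weaving knots cannot deliver the cusp-density limit you need, and the paper itself records the evidence. In the remark on weaving knots in this section, SnapPy computations give $d_{cc}(W(n,26)) \to 1.706$ as $n \to \infty$, nowhere near $3$. The reason is structural, not a matter of boundary-effect estimates. $W(p,q)$ is a \emph{knot}, so it has a single cusp; when that cusp is maximized, all horoballs in $\mathbb{H}^3$ corresponding to it must be expanded simultaneously and equally, and distinct lifts collide with one another long before reaching the symmetric octahedral packing. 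The packing of the infinite weave $\mathbb{W}$ that realizes density $3/v_{oct}$ assigns mutually tangent horoballs to infinitely many \emph{distinct} link components, which can be expanded independently; that freedom is destroyed when all strands are joined into one knot component. This is why geometric convergence $W(p,q) \to \mathbb{W}$ controls volume density (volume is additive over the octahedra) but does \emph{not} control maximal cusp volume, which is a global quantity depending on how the cusps are partitioned among components. So the second step of your plan fails, and no refinement of the "octahedra near the ends of the weave" estimates will repair it.

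The paper sidesteps this by using multi-component links rather than knots: the families $L(n,k)$ and $L'(n,k)$ are alternating links together with one augmenting component $A$, chosen so that the complement decomposes \emph{exactly} (not asymptotically) into $2n(k-1)$ regular ideal octahedra --- the component $A$ collapses the two $2n$-bipyramids of the bipyramid decomposition and forces every edge class to have four edges, so the regular ideal shapes satisfy Thurston's gluing and completeness equations on the nose. Exactness is what makes the rest work: the volume is exactly $2n(k-1)v_{oct}$, the standard packing gives cusp volume exactly $6n(k-1)$, and a case analysis (expanding any cusp from the standard packing, following Kozma--Szirmai) shows no choice of cusps does better, so the maximal cusp volume is known rather than approximated. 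Since the links are alternating, the crossing numbers are exactly $2nk+2k$ and $2nk+2k+2$, giving $d_{cc} = \frac{6n(k-1)}{2nk+2k} \to 3$ from below, with the "from below" direction automatic from the exact formula. If you want to salvage your outline, you need a family of links with many components whose complements are exactly octahedral and whose separate cusps can realize the standard packing --- which is precisely what the paper constructs.
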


\begin{figure}{}
\begin{center}
\includegraphics[scale= .6]{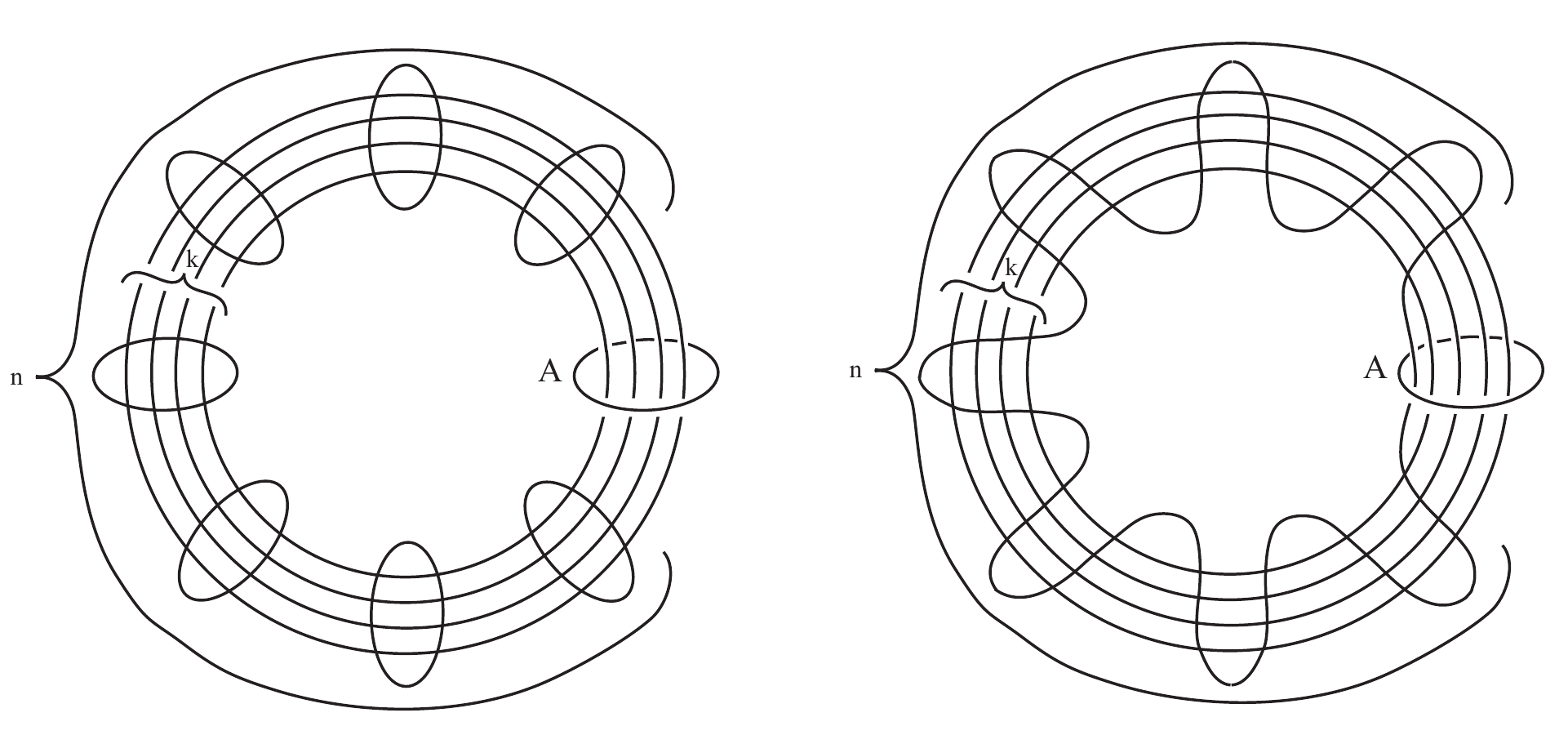}
\caption{Link families with cusp crossing density approaching 3 from below.}
\label{octahedral1}
\end{center}
\end{figure}

\begin{proof} Consider the two link families $L(n,k)$ and $L'(n,k)$ appearing in Figure \ref{octahedral1}. Ignoring component $A$, the link diagrams are to be alternating. If not for the presence of the component labelled $A$, the link complements $L(n,k)$ and and $L'(n,k)$ could be decomposed, using face-centered bipyramids as in \cite{Adams3} into $2n(k-1)$ octahedra and two $2n$-bipyramids for the innermost and outermost faces. All of these bipyramids have two ideal vertices at top and bottom while all the vertices around their equators are finite. The finite vertices are identified into two vertex classes, one we call $U$, which is the up vertex above the projection plane and one we call $D$, which is the down vertex below the projection plane. At crossings that do not involve bigons, there are three edge classes, one from $D$ to the bottom of the lower strand, one from the top of the lower strand to the bottom of the upper strand, and one from the top of the upper strand to $U$. 

For crossings that do not touch a bigon, there are four edges in each of the three edge class corresponding to the crossing, one on each of the four octahedra meeting at this crossing. 

We now consider crossings that do touch a bigon. The edges spanning the two crossings on the bigon are isotopic to one another and so they occur in the same edge class. This edge class appears twice on adjacent edges on the equator of the octahedron coming from the square that shares an edge with the bigon, and once each on the adjacent octahedra (appearing as edge class 1 in Figure \ref{edgeclasses}). However, there are currently other edge classes that do not contain four edges. 

As in \cite{Adams3}, the addition of the $A$ component, which passes through both $D$ and $U$, skewers  the inner and outer  bipyramids and collapses them down to polygons. It also makes all of the finite vertices of the octahedra into ideal vertices. The identification of the edges on the top of the two collapsed bipyramids with the edges on the bottom (edge class 4 with 6, 5 with 2 and 3 with 7 in Figure \ref{edgeclasses}) ensures that all edge classes now consist of four edges each. For instance, after these identifications, edge class 2 appears twice on the middle octahedron and once each on the other two octahedra.  Edge class 4 will appear twice on the upper left octahedron, once on the middle octahedron and once more on an octahedron further to the upper left and not appearing in this figure. 

\begin{figure}{}
\begin{center}
\includegraphics[scale= .8]{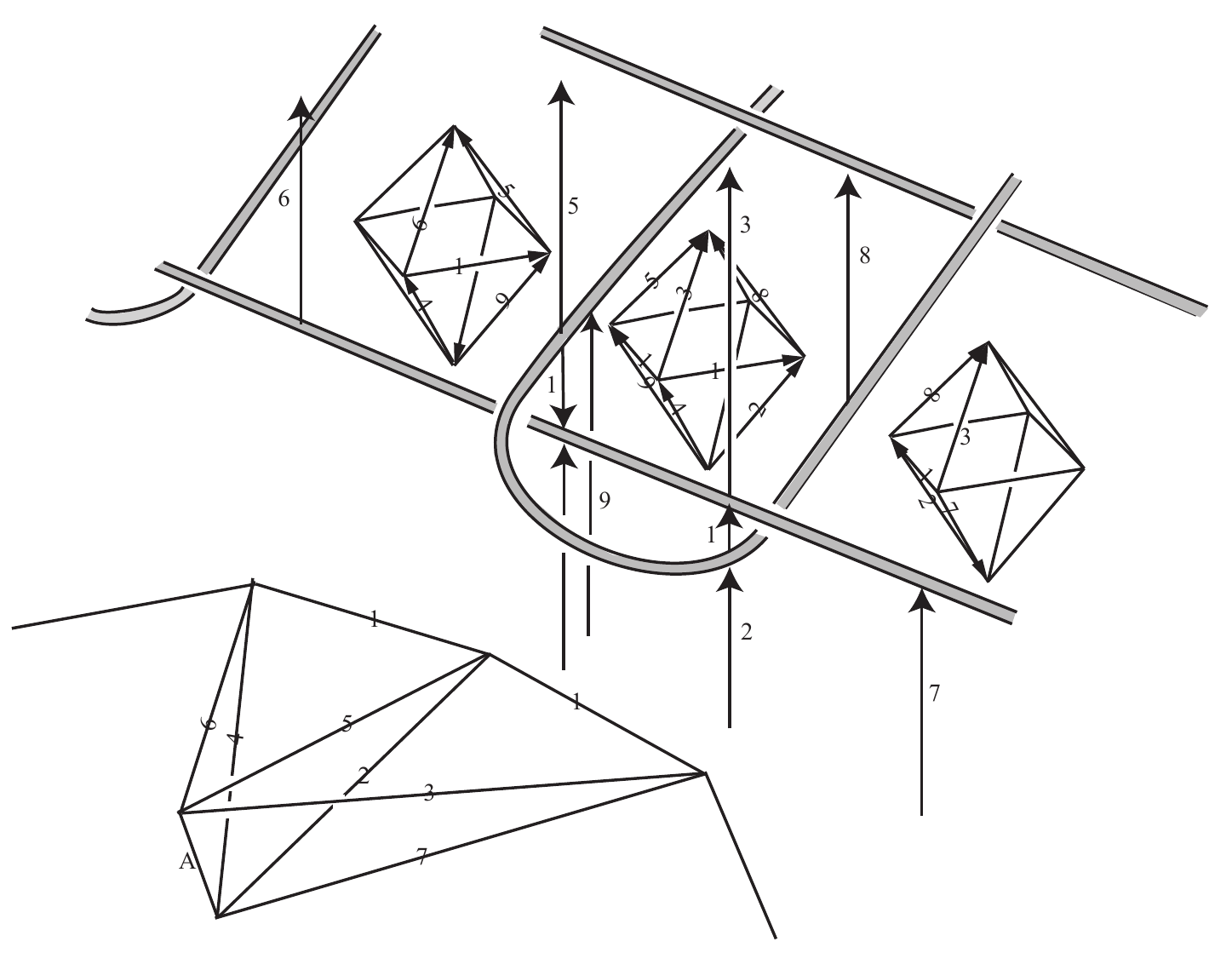}
\caption{Component $A$ collapses the $2n$-bipyramids and identifies edge classes to have four edges in each.}
\label{edgeclasses}
\end{center}
\end{figure}

 Thus, by taking all ideal regular octahedra, all of which have all dihedral angles of $\pi/2$,  we satisfy all of the gluing equations of Thurston including the completeness equations, since the boundary of the cusps are made up of equal sized squares. We therefore obtain the unique hyperbolic structure on the complement, with a volume of $2n(k-1) v_{oct}$.

If we choose the \emph{standard packing} of horoballs for this collection of octahedra, meaning the three horoballs that correspond to the vertices of a triangular face are all pairwise tangent, then they together form a valid set of cusps since the gluings on the faces respect this packing.  (See Figure \ref{squarehoroballs}.) They produce a volume of 3 in each octahedron, 1/2 from each vertex. If the maximal cusp volume of the manifold corresponds to this packing, then the maximal cusp volume will be $3(2n(k-1)) = 6n(k-1)$.

\begin{figure}[h]
\begin{center}
\includegraphics[scale =.5]{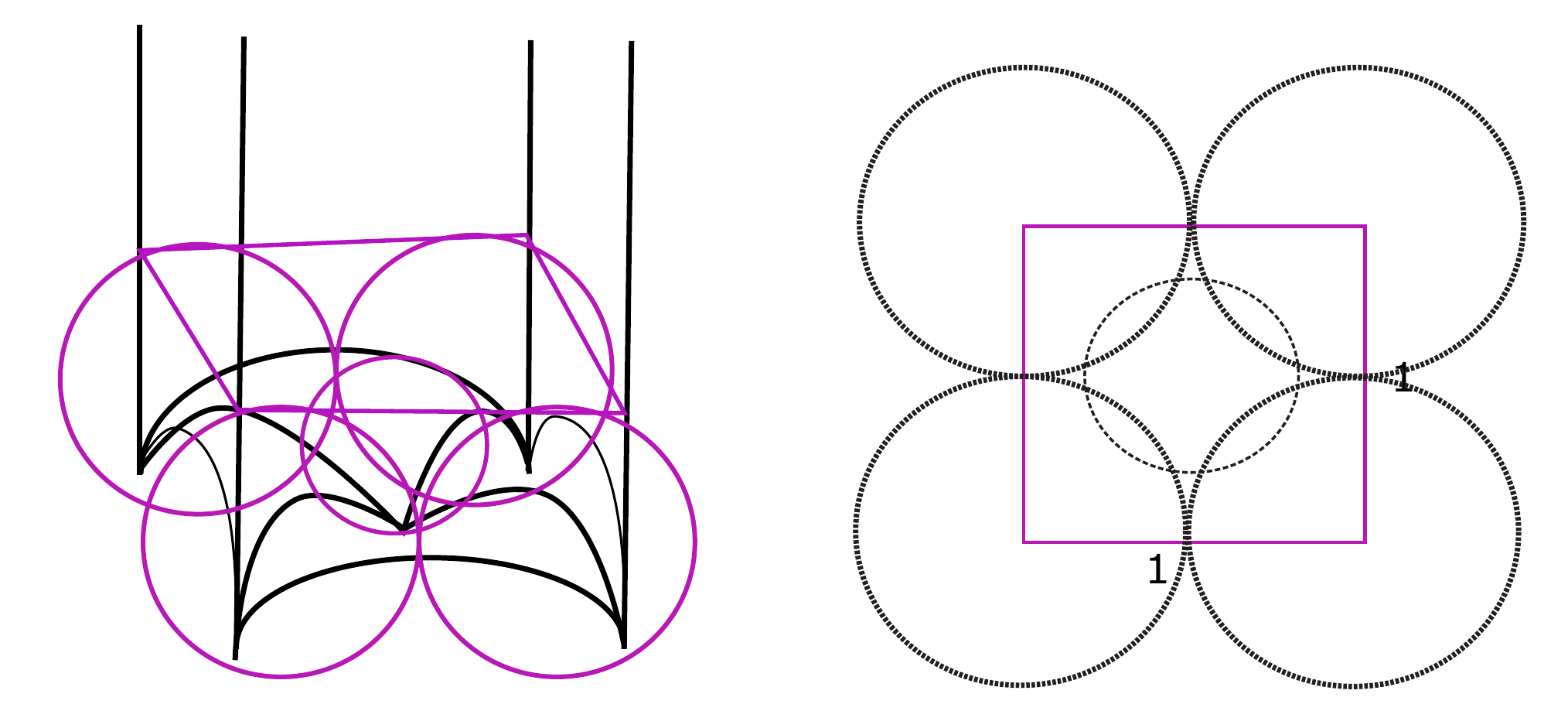}
\caption{Standard packing of horoballs for an ideal regular octahedron seen from the side and from above.}\label{squarehoroballs}
\end{center}
\end{figure}

Momentarily ignoring component $A$, the crossing number for such a link  must be $2nk$ since this is an alternating link. Since the component $A$ must link each of $k$ or $k+1$ of the components, we have that $c(L(n,k)) = 2nk + 2k$ and $c(L'(n,k)) = 2nk + 2k + 2$. 

Therefore: \[d_{cc}(L(n,k)) = \frac{6n(k-1)}{2nk + 2k} \hspace{.2cm} \text{and} \hspace{.2cm} d_{cc}(L'(n,k)) = \frac{6n(k-1)}{2nk + 2k + 2}.\]

As $k$ and $n$ approach $\infty$, these both approach 3 from below, as we wanted to show.

So, to complete the proof, we must show that there is no other choice of cusps that could generate a higher cusp volume than the choice that yields the standard packing. Any other choice can be obtained by increasing the size of certain cusps and decreasing sizes of cusps that touch them, starting from the standard packing. Note that if there are $n$ cusps, a maximal cusp volume must achieve at least $n$ points of tangency (cf.\cite{PT}).In order to obtain a higher volume in the set of cusps, at least one of the cusps must be expanded. There are three cases.
   
   Suppose a cusp has two horoballs with centers that are adjacent  vertices on one of the octahedra. Then it cannot be expanded as it is already touching itself in the standard packing. Suppose a cusp has two horoballs in one octahedron such that they are at opposite vertices. Then expanding this cusp forces all of the cusps corresponding to the other vertices to shrink and it shrinks total volume until the horoballs at the opposite vertices touch at the center of the octahedron. There the volume in the octahedron is 1 for each of these horoballs, while the remaining four horoballs have been shrunk down to size 1/4. However, the total cusp volume has bottomed out and come up again to  3.  See p.13-15 of \cite{KS} for details.
   
   In the last case, there is a cusp $C$ such that in each octahedron, at most one vertex corresponds to it.   However, in the case of  our link complements, if we take one of the octahedra $H_1$ to have vertex at $\infty$ corresponding to $C$, there is a second octahedron glued to one of the bottom faces of the octahedron, placing a $C$ vertex directly beneath the center of a vertical face of $H_1$. In other words, there are two faces on two octahedra such that those faces share an edge, they are in a totally geodesic plane and such that their opposite vertices both correspond to $C$. 
   
   So when we expand $C$, the horoballs at these vertices will bump into one another along that shared edge. This will prevent the horizontal horosphere corresponding to the $C$ vertex at $\infty$ from expanding down past the equatorial edges of $H_1$. In fact, when it reaches those edges, it will be tangent to all of them. Since these edges peak at a height of 1/2, the volume in this horoball in $H_1$ will be 2. Each of the four vertices on the equator will have horoballs with volume 1/8, and the horoball at the center vertex in the boundary plane will have volume 1/2. So once again, the cusp volume in this octahedron is at most 3. So there is no gain in total cusp volume. See p. 13-15 of \cite{KS} for details in this case as well. 
   
   Thus, in all cases, there cannot be a choice of cusps that generates more total volume in the cusps than the standard packing.

\end{proof}



\begin{remark}
A family of knots with notably high cusp crossing densities is the weaving knots described by Champanerkar, Kofman, and Purcell\cite{CKP1}. They define the weaving knots as alternating versions of the standard projections of the  $(p,q)$-torus knot or link and denote them $W(p,q)$. Up through knots of 14 crossings, they have the highest cusp crossing density. These knots are useful because, as described in \cite{CKP2},  as $p$ and $q$ go to infinity, their volume densities limit to the volume density of the infinite square weave, which is the maximum possible, $v_{oct} = 3.6638\dots$. Utilizing  the D. Thurston decomposition of  link complements via one octahedron at each crossing, in the case of the weaving knots, these octahedra limit to regular octahedra as the weaving knots grow towards the infinite square weave. This raises volume density. Since $d_{cc}(K) \leq d_{vol}(K) v_{oct}$, this suggests the possibility of higher cusp crossing density values. Figure \ref{w(10,11)} shows weaving knot $W(10,11)$. Although the largest weaving knot that we have input that does not crash SnapPy is $W(10,11)$, one can do the following to get higher cusp crossing densities for knots. Take weaving knot $W(p,q)$, and  construct a 2-component link $W'(p,q)$ with first component $C_1$ corresponding to $W(p,q)$ and second component $C_2$ obtained by adding a trivial component   through the center hole linking around the entire projection. Then the cusp crossing density of $C_1$ in $W'(p,q)$ is equal to the limit of the cusp crossing density of $W(n,q)$ as $n \to \infty$. Moreover, since $W'(p,q)$ is a $p$-fold cover of the link $W'(1,q)$, its first cusp has the same cusp crossing density as does the first cusp of $W'(p,q)$. Hence, we can find the cusp crossing density of $W('1,q)$, and there will be knots of the form $W(n,q)$ with cusp crossing densities approaching this number. Using SnapPy, we have found that $d_{cc}(W(n,26))$ approaches 1.706 as $n\to \infty$.  So there do exist knots with cusp crossing density approaching this number.
\end{remark}

\begin{question} Do the weaving knots have the highest cusp crossing density of all knots, i.e. for any knot, is there a weaving knot that has a higher cusp crossing density?
\end{question}

\begin{figure}[h]
\begin{center}
\includegraphics[scale = .4]{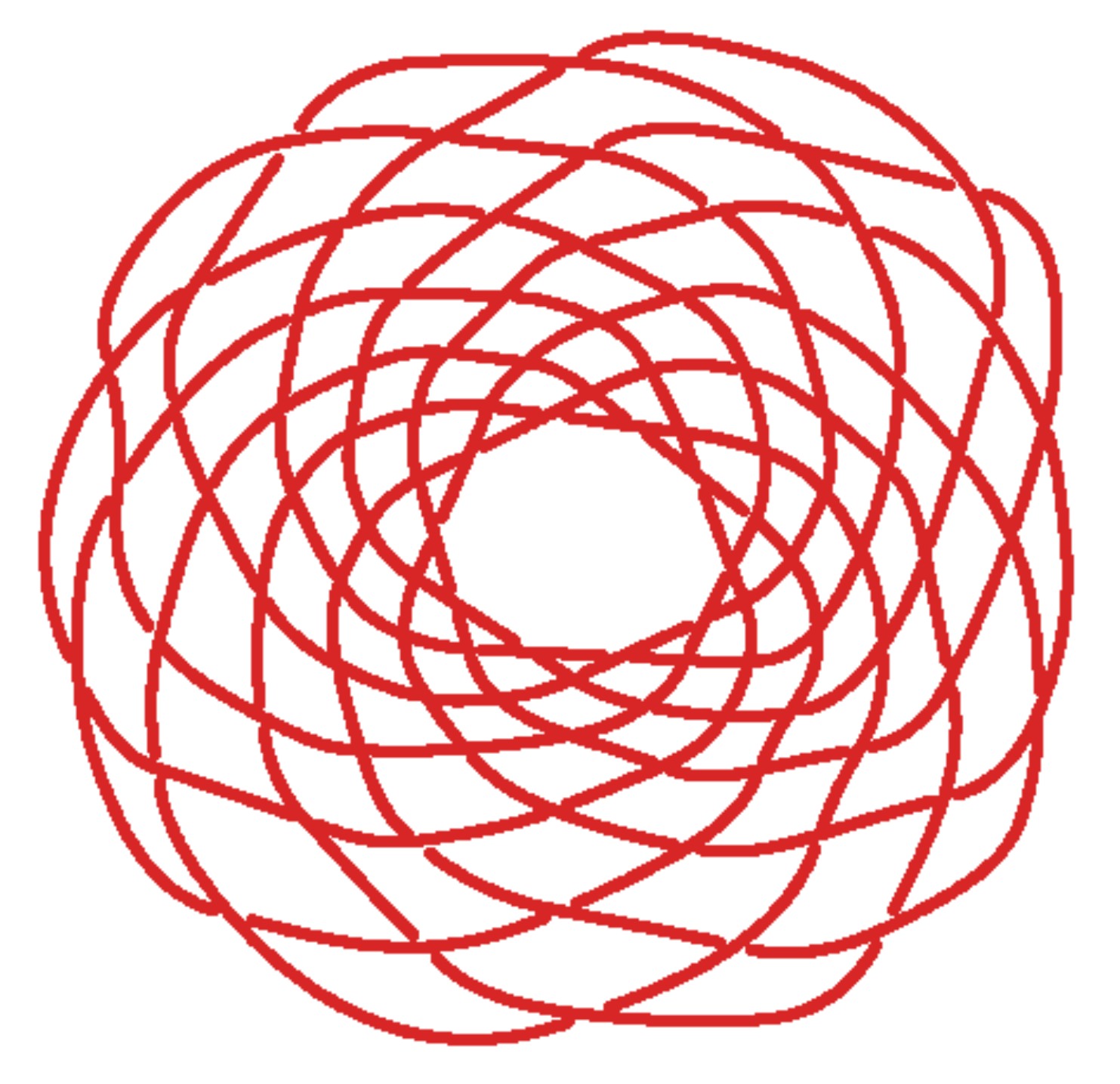}
\caption{Weaving knot $W(10,11)$ (SnapPy)}
\label{w(10,11)}
\end{center}
\end{figure}

Consider an augmented cross tangle link as in Figure 4, such that the tangle is alternating. Let $d=\frac{V_1+V_2}{c-4}$, where $V_1$ is the restricted cusp volume of the cusp containing the tangle, which has been maximized first, and $V_2$ is the cusp volume of the cusp containing the trivial component maximized relative to the cusp of the component containing $T$, and $c$ is the crossing number of $L$.

\begin{lemma}\label{twocomponent}
Cusp crossing densities of two component links are dense in the interval $[0, d]$.  
\end{lemma}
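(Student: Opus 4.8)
The plan is to mirror the proof of Lemma \ref{knot density}, replacing cusp density by cusp crossing density throughout. I need two ingredients: a family of two-component augmented cross tangle links whose cusp crossing densities approach the upper value $d=\frac{V_1+V_2}{c-4}$ from below, and a family whose cusp crossing densities approach $0$. Belted sums will then interpolate continuously between them, exactly as $f(t)$ does in Lemma \ref{knot density}. The essential new bookkeeping is that, while volume and the tangle cusp volume are additive under belted sums (by the belted sum cusp-volume lemma above, the formula $V_{C_3}=V_{C_1}+\left(\frac{m_1}{m_2}\right)^2V_{C_2}$), crossing number is \emph{not} simply additive: when two augmented cross tangle links of crossing numbers $c_1$ and $c_2$ are belt summed, the four crossings contributed by the trivial component are counted only once, so the belted sum has crossing number $c_1+c_2-4$. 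This is the origin of the $c-4$ appearing in $d$.

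For the upper family, I would iterate the belted sum of $k$ copies of $L$, keeping only those $k$ for which the result is a two-component link (the parity condition isolated in Lemma \ref{knot density}, with one component the tangle and the other bounding the twice-punctured disk). By the crossing accounting above, such a link has crossing number $k(c-4)+4$. Since all tangle meridians equal the common value of $L$, no $\left(\frac{m_1}{m_2}\right)^2$ shrinkage occurs and the cusp volume accumulates additively across the $k$ summands, so the cusp crossing density is a ratio whose numerator grows like $k(V_1+V_2)$ and whose denominator grows like $k(c-4)$, forcing the density to tend to $\frac{V_1+V_2}{c-4}=d$. Throughout one must discard members exhibiting poking, exactly as in the construction of $F_1$ and $F_2$, so that the belted sum cusp-volume lemma applies. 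For the lower family I would reuse the daisy chain construction: a two-component augmented cross tangle link built from $D_n$ by $(1,p)$-Dehn filling (Lemma \ref{chain lemma} keeps us in the category of link complements) and inserting a half-twist has bounded maximal cusp volume while its crossing number grows without bound, so its cusp crossing density tends to $0$.

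Given a link $l_2$ from the upper family and $l_1$ from the lower family, I would then belt sum $k$ copies of $l_2$ and $p$ copies of $l_1$ and write the cusp crossing density as $f(t)$ with $t=\frac{p}{k}$, where now the denominator carries the $-4$ correction, e.g. $k(c_{l_2}-4)+p(c_{l_1}-4)+4$. As in Lemma \ref{knot density}, $f$ is continuous, with $\lim_{t\to 0}f(t)$ equal (in the limit over the families) to $d$ and $\lim_{t\to\infty}f(t)=0$. Since rationals $\frac{p}{k}$ with $k+p$ odd are dense in $[0,\infty)$ and yield two-component links, it follows that cusp crossing densities of two-component links are dense in $[0,d]$.

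The main obstacle is the cusp-volume accounting for the upper family: I must show that the total maximal cusp volume of the $k$-fold belted sum grows like $k(V_1+V_2)$ rather than like $kV_1+O(1)$, i.e. that the contribution $V_2$ of the trivial component persists per copy and is not lost to a single surviving belt. This is precisely where the hypotheses that the tangle is alternating and that there is no poking must be used: I expect to argue that the horoball packing along each twice-punctured disk is preserved under the gluing, so that the relative maximization of the belt cusp against the merged tangle cusp continues to contribute $V_2$ for each copy. A secondary point requiring care is verifying that the crossing number of each belted sum is genuinely $k(c-4)+4$, and not merely bounded below by it; this should follow from alternation of the tangle together with the fact that the trivial component contributes a fixed four crossings.
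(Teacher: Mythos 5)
Your proposal leaves its central step unproven, and it is exactly the step where your approach, as formulated, would stall. You correctly identify the obstacle: in a $k$-fold belted sum there is only one belt component, so you must show the maximal cusp volume grows like $k(V_1+V_2)$ rather than $kV_1+O(1)$, and you offer only the expectation that a horoball-packing argument will deliver this. The belted-sum lemma you plan to iterate cannot supply it: that lemma concerns only the tangle cusps maximized first, and says nothing about the belt cusp maximized relative to them. It also imports a no-poking hypothesis that the statement of Lemma \ref{twocomponent} does not grant you: the constant $d$ is defined from a given augmented cross tangle link $L$, and if $L$ itself had poking you could not ``discard'' it the way you discard members of the auxiliary families. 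Your mixed belted sums (copies of $l_2$ with copies of $l_1$) compound the difficulty, since there the belt-cusp and meridian-ratio bookkeeping across different summands is also left unaddressed.

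The paper closes this gap with an observation you are one step away from: the $k$-fold belted sum of identical copies of $L$ is precisely the $k$-fold cyclic cover of $S^3\setminus L$ with respect to the trivial component, since both are obtained by cutting along the twice-punctured disk and gluing $k$ copies in a cycle. Under a covering, volume and cusp volume scale exactly by the degree: for $k$ odd the tangle lifts to a single component whose maximal cusp has volume $kV_1$, and the belt lifts to a single component whose cusp has volume $kV_2$ because its meridian unwinds $k$ times; tangencies lift to tangencies, so maximality is preserved. No poking hypothesis and no packing argument are needed. For the interpolation down to $0$, the paper also avoids your second family: instead of belt-summing with low-density links, it cuts along the twice-punctured disk and inserts $m$ full twists, which leaves the complement homeomorphic (so volume and cusp volume are unchanged) while the crossing count increases by $2m$. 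This yields a link $J(n,m)$ (the paper's $n$ is your $k$) with $d_{cc}(J(n,m))=\frac{n(V_1+V_2)}{n(c-4)+4+2m}$, and then your continuity argument applies verbatim with $t=n/m$, choosing $n$ odd. One last small correction: the diagrammatic count $k(c-4)+4$ bounds the crossing number from \emph{above}, not below; exactness, which the paper asserts and which is where alternation of the tangle is relevant, is the direction that needs care.
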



\begin{figure}[h]
\begin{center}
\includegraphics[scale = .5]{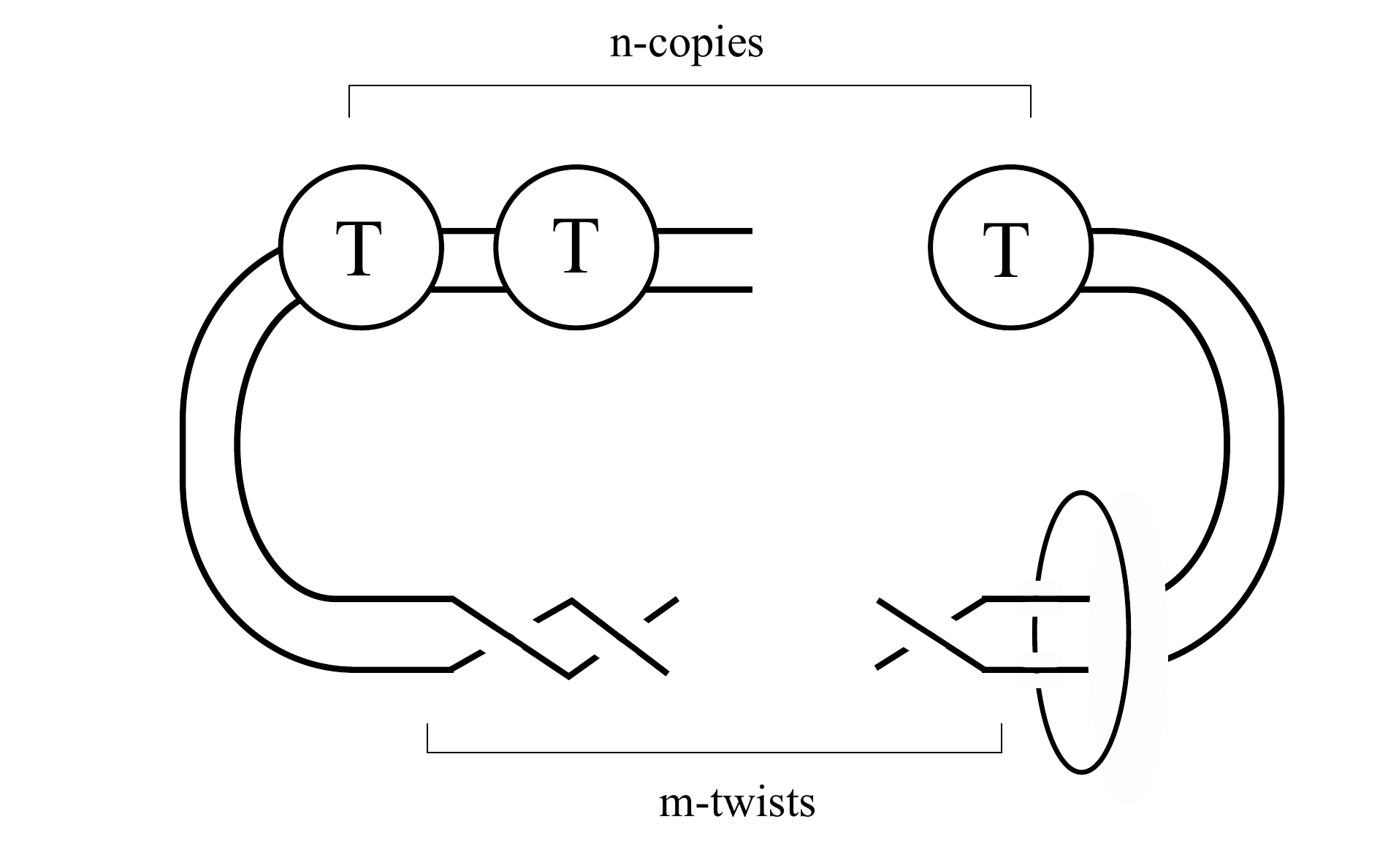}
\caption{n-fold cover followed by (1,m) surgery}
\end{center}
\end{figure}

\begin{proof} 
We take an $n$-fold cyclic cover (where $n$ is odd) with respect to the trivial component. Since $n$ is odd, the resulting link is still an augmented knot. The initial cusp volume $V$ is the sum of $V_1$ and $V_2$. The n-fold cover results in a link with cusp volume $n(V_1 + V_2)$ and the crossing number becomes $n(c-4) + 4$. Cutting along the twice punctured disk bounded by the trivial component and twisting $m$ full twists increases the crossing number by $2m$ while the link complement remains homeomorphic to that of before. The resulting link $J(n,m)$ has cusp crossing density: 

$$d_{cc}(J(n,m)) = \frac{n(V_1 + V_2)}{n(c-4)+4+2m} = \frac{\frac{n}{m}(V_1 + V_2)}{\frac{n}{m}(c-4)+\frac{4}{m}+2}.$$


As $m$ becomes arbitrarily large with respect to $n$, the cusp crossing density approaches 0. If $n$ becomes arbitrarily large relative to $m$, the cusp crossing density approaches $d$. 

Define $$f(t) = \frac{t(V_1 + V_2)}{t(c-4)+2}.$$ Then since f is continuous and can approach both 0 and $d$, for every $y \in (0,d)$, there exists a $t$ such that $f(t) = y$.

 We can choose $\frac{n}{m}$ to be arbitrarily close to $t$ in such a way that $n$ is odd and $m$ is arbitrarily large,  thereby giving cusp crossing density arbitrarily close to $y = f(t)$ for the resulting link. Therefore the cusp crossing densities of two-component links are dense in the interval $[0,d]$.
\end{proof}

We may now find links suitable for the lemma above in order to give specific bounds for intervals of density of cusp crossing density.

\begin{theorem}
Cusp crossing density for two component links is dense in the interval $[0, 1.6923\dots]$.
\end{theorem}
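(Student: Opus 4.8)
The plan is to deduce the theorem from Lemma \ref{twocomponent} by a single explicit example. That lemma shows that for \emph{any} augmented cross tangle link $L$ with alternating tangle, the cusp crossing densities of two-component links are dense in $[0,d]$, where $d = \frac{V_1+V_2}{c-4}$, with $V_1$ the tangle cusp maximized first, $V_2$ the trivial cusp maximized relative to it, and $c$ the crossing number of $L$. So it is enough to exhibit one such link realizing $d = 1.6923\dots$; this automatically gives density on the whole interval $[0,1.6923\dots]$.

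First I would hunt for the alternating augmented cross tangle link that makes $d$ as large as possible. Since $d$ rewards large cusp volume against small crossing number, the natural candidates are links whose complements decompose into ideal regular octahedra, where the standard horoball packing contributes cusp volume $3$ per octahedron while each octahedron carries roughly one crossing; these are exactly the efficient building blocks used in the ``approaching $3$'' lemma. Restricting that kind of decomposition to the two-component augmented cross tangle setting, and computing the resulting $V_1$, $V_2$, and $c$ in closed form, should produce a link with $d = 1.6923\dots$ as the optimal value attainable by the method. I would use SnapPy both to locate the best such link and to confirm its maximal cusp volume numerically.

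The key steps, in order, are: (i) confirm that the chosen $L$ is genuinely a two-component augmented cross tangle link whose tangle is alternating, so that Lemma \ref{twocomponent} applies and $c$ is the crossing number appearing in the formula; (ii) compute $V_1$ with the tangle cusp maximized first, reading the horoball sizes off the ideal regular decomposition; (iii) compute $V_2$ for the trivial cusp maximized \emph{relative} to the already-fixed tangle cusp, using the description of the twice-punctured disk in the horoball diagram, where the constancy of the meridian along cusp $1$ forces the three relevant horoballs to be equal in size; and (iv) count $c$ and evaluate $d = \frac{V_1+V_2}{c-4} = 1.6923\dots$.

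The main obstacle is step (iii): because $d$ depends on the \emph{order} of maximization, I need $V_2$ as a relative maximum rather than an absolute one, which means tracking exactly which horoballs of the trivial cusp first become tangent once the tangle cusp has been frozen at its first-maximized size. This is the same horoball bookkeeping that drove the belted-sum lemma, and getting the tangency conditions right is where the computation can go wrong. A secondary difficulty is verifying that the exhibited link is genuinely optimal among alternating augmented cross tangle links, so that $1.6923\dots$ is the largest endpoint the construction delivers.
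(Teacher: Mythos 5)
Your overall reduction is exactly the paper's: apply Lemma \ref{twocomponent} to a single well-chosen augmented cross tangle link with alternating tangle, and the theorem follows with the interval endpoint equal to that link's $d$. But your proposal never actually produces the example, and the strategy you outline for finding it would not deliver the stated constant. The paper's example is the weaving knot $W(10,11)$ with a trivial component wrapped around its two outermost strands; this is hyperbolic because augmented alternating link complements are hyperbolic (Adams), and $V_1$, $V_2$ are computed \emph{numerically} with SnapPy, giving $d = \frac{V_1+V_2}{c-4} = 1.6923\dots$. Your plan to ``compute $V_1$, $V_2$, and $c$ in closed form'' from an ideal regular octahedral decomposition cannot work here: the augmented weaving knot complement (and indeed any finite weaving knot complement) is not built from ideal \emph{regular} octahedra --- the weaving knots only approach that geometry in the limit $p,q \to \infty$ --- so there is no closed-form horoball count available, and the links in this paper that do decompose into regular octahedra are the multi-component families $L(n,k)$, not two-component augmented cross tangle links.

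A second, conceptual error: you treat $1.6923\dots$ as a target that must be realized by an \emph{optimal} link, and you list ``verifying that the exhibited link is genuinely optimal'' as a necessary step. It is not. The theorem's constant is simply whatever value of $d$ the chosen example happens to give; any concrete admissible link yields density in $[0,d]$ for its own $d$, and the paper makes no optimality claim (indeed the authors suspect the true supremum for links is $3$). So the optimality verification is a red herring, while the genuinely required step --- naming a specific link and certifying its $V_1$, $V_2$ (with the correct order of cusp maximization, which SnapPy handles), and its crossing number --- is the step your proposal leaves open.
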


\begin{proof}
Consider the weaving knot $W(10,11)$ with an added trivial component wrapped around its two outermost strands. The result is hyperbolic by \cite{Adams2}.Using the program SnapPy \cite{snap} to calculate $V_1$ and $V_2$ we find that $d= \frac{V_1+V_2}{c-4}= 1.6923$. We may then apply Lemma 4.4 to obtain the result.  
\end{proof}

\begin{theorem}
Cusp crossing density for links is dense in the interval $[0, 2.120\dots]$.
\end{theorem}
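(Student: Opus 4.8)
The plan is to mimic the proof of the two-component result, replacing the single-cusp base $W(10,11)$ by a base link that captures more cusp volume per crossing and allowing the non-augmenting part to carry several components, so that the output is a general link. The engine is the cover-and-twist construction of Lemma \ref{twocomponent}: given an augmented tangle link $L$ with alternating tangle, total maximized cusp volume $V_{\mathrm{tot}}$ on the non-augmenting cusps, and crossing number $c$, an $n$-fold cyclic cover with respect to the trivial augmenting component (with $n$ odd) followed by $(1,m)$-twisting along the bounded twice-punctured disk produces links of cusp crossing density
\[
\frac{n\,V_{\mathrm{tot}}}{n(c-4)+4+2m},
\]
which, as $n/m$ ranges over $(0,\infty)$, is dense in $[0,d]$ with $d=\frac{V_{\mathrm{tot}}}{c-4}$. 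First I would observe that this argument never used the number of components of the tangle part: the covering respects the twice-punctured disk regardless of how many strands thread it, and $n$ odd keeps the relevant strands connected through the cover, so the conclusion holds verbatim for general links once $V_{\mathrm{tot}}$ is read as the sum of the maximized volumes of all non-augmenting cusps (maximized in the correct order, as in the hypotheses of Lemma \ref{twocomponent}).

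Next I would exhibit a single base link realizing $d\approx 2.120$. Since a general link may carry many cusps, the natural candidates are augmented octahedral weaves: a link whose alternating part admits the face-centered octahedral decomposition used in the proof that cusp crossing densities approach $3$, so that under the standard packing each octahedron contributes cusp volume $3$, augmented by one trivial component playing the role of cusp $2$. For such a base the numerator $V_{\mathrm{tot}}$ is driven by the $v_{oct}$-rich octahedral cells while the denominator pays only the fixed cost of the four augmenting crossings, pushing $d$ above the single-cusp weaving value $1.6923\dots$. I would then fix a concrete, SnapPy-checkable member of this family, verify its hyperbolicity (via \cite{Adams2}) and the absence of poking, and confirm numerically that $\frac{V_{\mathrm{tot}}}{c-4}=2.120\dots$; applying the generalized density lemma then yields density in $[0,2.120\dots]$.

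The main obstacle is precisely this numerical step. Unlike the clean asymptotic bound of $3$ coming from the limiting standard packing, the $d$ achievable by an honest \emph{finite} augmentable link is depressed both by the four augmenting crossings and by the fact that not all of the octahedral cusp volume survives once the cusps are maximized in a fixed order relative to the augmenting component; balancing these against the crossing count to reach exactly $2.120\dots$ is what caps the interval below $3$, and pinning down the optimal base among the many augmentable octahedral weaves is where the real work lies. A secondary, more routine point is to check that the cover-and-twist output of a multi-component base remains hyperbolic and stays in augmented tangle form — this follows as in Lemma \ref{twocomponent}, since the twice-punctured disk remains totally geodesic under belted gluing and $n$ odd preserves the augmented structure.
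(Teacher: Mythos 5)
Your proposal is correct and follows essentially the same route as the paper: the paper takes the octahedral link $L(7,8)$ from Figure \ref{octahedral1}, adds a trivial component bounding a twice-punctured disk through the two outermost concentric components, uses SnapPy to get total non-augmenting cusp volume $267.1551$ over $126$ crossings (giving $d \approx 2.120$), and then applies the cover-and-twist argument of Lemma \ref{twocomponent} exactly as you describe. The only caveat is one of framing: $2.120\dots$ is not an optimum to be hunted for among augmented octahedral weaves, but simply the value of $V_{\mathrm{tot}}/(c-4)$ for this particular base link, and no poking hypothesis is needed since the cover-and-twist construction is a homeomorphism argument rather than a belted-sum cusp computation.
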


\begin{proof} Take the left link L$(7,8)$ from Figure \ref{octahedral1}.  Add a trivial component that forms the boundary of a twice-punctured disk that is punctured by the two outermost concentric components. According to SnapPy, the resulting link has volume $356.69\dots$ and cusp volume of 267.1551, not including the cusp volume of the added component. Excluding the added component, the number of crossings is 126. Then, as in the proof of Lemma \ref{twocomponent}, by taking covers and surgeries on the added component, we can show that cusp densities of links are dense in the interval [0,2.120\dots].
\end{proof}

\end{document}